\newtheorem{teorema}{Theorem}
\newtheorem{pro}{Proposition}
\newtheorem{cor}{Corollary}
\def\bsuffix #1{#1}
\begin{document}
\begin{frontmatter}

\title{Markovian bridges: Weak continuity and\\
pathwise constructions\thanksref{T1}}

\thankstext{T1}{Collaboration was possible through Grants ECOS M01 M07 and ANR-09-BLAN-0084-01 with the Universit\'e d'Angers as a host.}
\runtitle{Markovian bridges}

\begin{aug}
\author[A]{\fnms{Lo\"ic} \snm{Chaumont}\ead[label=e1]{loic.chaumont@univ-angers.fr}}
\and
\author[B]{\fnms{Ger\'onimo}
\snm{Uribe Bravo}\ead[label=e2]{geronimo@sigma.iimas.unam.mx}\thanksref{T2,T3}\corref{}}

\thankstext{T2}{Currently at the Department of Statistics, University
of California at Berkeley.}
\thankstext{T3}{Supported by CoNaCyT Grant 174498 and a postdoctoral
fellowship from UNAM, and partially conducted at UNAM's Instituto de Matem\'aticas and Paris VI's Laboratoire des
Probabilit\'es et Mod\`eles Al\'eatoires.}
\runauthor{L. Chaumont and G. Uribe Bravo}
\affiliation{Universit\'e d'Angers and Universidad Nacional Aut\'onoma
de M\'exico}
\address[A]{LAREMA\\
D\'epartement de Math\'ematiques\\
Universit\'e d'Angers\\
2, Bd Lavoisier-49045, Angers Cedex 01\\
France\\
\printead{e1}} 
\address[B]{Departamento de Probabilidad y Estad\'istica\\
Instituto de Investigaciones\\
\quad en Matem\'{a}ticas Aplicadas y en Sistemas\\
Universidad Nacional Aut\'onoma de M\'exico \\
Mexico City, A.P. 20-726\\
Mexico\\
\printead{e2}}
\end{aug}

\received{\smonth{5} \syear{2009}}
\revised{\smonth{4} \syear{2010}}

%
\begin{abstract}
A Markovian bridge is a probability measure taken from a disintegration
of the law of an initial part of the path of a Markov process given its
terminal value. As such, Markovian bridges admit a natural
parameterization in terms of the state space of the process. In the
context of Feller processes with continuous transition densities, we
construct by weak convergence considerations the only versions of
Markovian bridges which are weakly continuous with respect to their
parameter. We use this weakly continuous construction to provide an
extension of the strong Markov property in which the flow of time is
reversed. In the context of self-similar Feller process, the last
result is shown to be useful in the construction of Markovian bridges
out of the trajectories of the original process.
\end{abstract}

\begin{keyword}[class=AMS]
\kwd[Primary ]{60J25}
\kwd[; secondary ]{60J65}.
\end{keyword}

\begin{keyword}
\kwd{Markov bridges}
\kwd{Markov self-similar processes}.
\end{keyword}


\end{frontmatter}
%
\section{Introduction and main results}
\subsection{Motivation}
The aim of this article is to study Markov processes on $[0,t]$,
starting at $x$, conditioned to arrive at $y$ at time $t$.
Historically, the first example of such a conditional law is given by
Paul L\'evy's construction of the Brownian bridge: given a Brownian
motion $B$ starting at zero, let
\[
b^{x,y,t}_s=x+B_s-B_t\frac{s}{t}+( y-x)\frac{s}{t}.
\]
Then $b^{x,y,t}$ is a version of $B$ started at $x$ and conditioned on
$B_t=y$ in the sense that
\[
\mathbb{E}\bigl( F\bigl( ( x+B_s)_{s\in[0,t]}\bigr)f( x+B_t) \bigr)=\int
\mathbb{E}( F( b^{x,y,t}) )f( y)\mathbb{P}( x+B_t\in dy ).
\]
This example synthesizes the main considerations of this work: one is
able to construct a specific version of the disintegration of the law
of $( B_s)_{s\in[0,t]}$ given $B_t$ which is weakly continuous,
and one is able to give a pathwise construction of this conditional law
out of the trajectories of $B$. Since there is at most one weakly
continuous disintegration, it is natural to look for conditions
guaranteeing its existence and to characterize it, which we do in the
class of Feller processes. Kallenberg has given in \cite{backwardkallenberg} a very general result for the existence of weakly
continuous bridges of L\'evy processes using the convergence criteria
for processes with exchangeable increments of \cite{kallenbergExchangeable}. It is a consequence of our results. The first
abstract framework for the existence of particular bridge laws in the
context of Markov process is \cite{markovbridges}. It is different
from the one adopted in this work since they rely on duality
considerations while we rely mainly on the Feller property. A more
recent study of the question of existence of bridge laws without
duality hypotheses is \cite{BarczyPap}. The framework is similar to
ours although the laws are constructed on a product space by
Kolmogorov's extension theorem; we supplement their construction by
providing an analysis of path regularity of bridges and, in doing so,
end up with a different proof of existence which relies mainly on weak
convergence.

Paul L\'evy's Gaussian construction of the Brownian bridge is of
limited applicability in the context of Markov processes. However, he
also gave a different pathwise construction of a Brownian bridge with a
Markovian flavor: let $g$ be the last zero of $B$ before time $1$,
which is not zero since $B$ comes back to zero at arbitrarily small
times, and set
\[
b_t=\frac{1}{\sqrt{g}}B_{g\cdot t}
\]
for $t\leq1$. Then $b$ and $b^{0,0,1}$ have the same law. We will
provide further examples of this type of pathwise construction, which
in the case of Brownian motion is given as follows. Let $g_c=\sup\{
t\leq1\dvtx B_t=c\sqrt{t}\}$; the positivity of $g_c$ for any $c\in
\mathbb{R}$
is an immediate consequence of the asymptotic behavior of the Brownian
curve at~$0$. If
\[
b^c_t=\frac{1}{\sqrt{g_c}}B_{g_c\cdot t}
\]
for $t\leq1$, then $b^c$ and $b^{0,c,1}$ have the same law. To compute
the law of $b^c$, we extend the usual strong Markov property: note that
$\{ g_c>t\}\in\sigma( B_u\dvtx u\geq t)$ so that $g_c$ is a kind of
backward optional time at which a version of the strong Markov property
holds, the law of $( B_{s\wedge g_c})_{t\geq0}$ given $\sigma(
B_u\dvtx u\geq g_c)$ is that of a Brownian bridge from $0$ to $c\sqrt
{g_c}=B_{g_c}$ of length $g_c$. Applying Brownian scaling gives the
desired result. Looking at our results in the preliminary draft, Marc
Yor noticed and pointed out to us the following short proof in the
special case of Brownian motion: by time-inversion, $\tilde B_t=t
B_{1/t}$ is a Brownian motion and
\[
g_c=1/\inf\bigl\{ t\geq1\dvtx \tilde B_{t}=c\sqrt{t}\bigr\};
\]
denote by $T$ the stopping time appearing in the denominator. By the
strong Markov property and scaling, $X_t=\tilde B_{T( 1+t)}/\sqrt
{T}-c$, $t\geq0$, is another Brownian motion, so that
\[
t\bigl( X_{( 1-t)/t}-c\bigr)=B_{tg_c}/\sqrt{g_c}-t c,\qquad t\in[0,1],
\]
has the same law as $b^{0,0,1}$ and so $b^c$ and $b^{0,c,1}$ have the
same law.

Note, however, that our methods will apply to self-similar processes
which do not posses the time inversion property. In particular, we will
study the case of stable L\'evy processes.


\subsection[The results]{Statement of the results}\label{shiftOperatorsDef}
We will work on an arbitrary locally compact metric space with a
countable base (or LCCB for short) denoted $( S,\rho)$. On it we will
consider the Borel $\sigma$-field denoted $\mathscr{B}_{S}$ and
$b\mathscr{B}_{S}$ will
stand for the
set of measurable and bounded functions from $S$ to $\mathbb{R}$. We will
consider a \textit{Markovian family} of probability measures on this
space which satisfy the Feller property, by which the following is
meant. Let $D_\infty$ ($D_t$) stand for the Skorohod space of
c\`adl\`ag functions from $[0,\infty)$ ($[0,t]$) into $S$ and consider
on it the shift operators $\theta_t\dvtx D_\infty\to D_\infty$
given by $\theta_t f\dvtx s\to f( t+s)$
(they can also be defined on $[0,t']$ if $t'>t$). Let
$X=( X_s)_{s\geq0}$ denote the canonical process, and write $\mathscr{F}$
and $( \mathscr{F}_s)_{s\geq0}$ for the $\sigma$-field and the
canonical filtration
generated by $X$.

\begin{definic*}
A \textit{Markovian family} on $( S,\rho)$ is a collection of probability
measures $( \mathbb{P}_{x})_{x\in S}$ on $D_\infty$
indexed by
the elements of $S$ which satisfies

\textit{Starting point property}: For all $x\in S$,
\[
\mathbb{P}_x( X_0=x)=1.
\]

\textit{Measurability property}: For all $F\in b\mathscr{F}$,
\[
x\mapsto\mathbb{E}_{x}( F)
\]
is measurable.

\textit{Markov property}: For every $F\in b\mathscr{F}_s$ and every
$G\in b\mathscr{F}$,
\[
\mathbb{E}_{x}( F\cdot G\circ\theta_s)=\mathbb{E}_{x}\bigl( F\cdot\mathbb{E}_{X_s}( G)\bigr).
\]

A Markovian family $( \mathbb{P}_x)_{x\in S}$ is said to satisfy
the Feller property (and we will therefore speak of a \textit{Feller
family}) if the operators $( P_s)_{s\geq0}$ defined on $b\mathscr{B}_{S}$
by means of $P_sf( x)=\mathbb{E}_x( f( X_s))$ map the
space of continuous functions $f\dvtx S\to\mathbb{R}$ which vanish at
infinity into itself and for any such $f$ we have:
\[
\lim_{t\downarrow0}\Vert P_sf-f\Vert=0,
\]
where $\Vert\cdot\Vert$ denotes the uniform norm.
\end{definic*}

Of course, Feller families are in bijection with (conservative) Feller
semigroups. In this case, we even have the strong Markov property at
every stopping time $T$: for every $F\in b\mathscr{F}_T$ and every
$G\in b\mathscr{F}$,
\[
\mathbb{E}_{x}( F\cdot G\circ\theta_T)=\mathbb{E}_{x}\bigl( F\cdot\mathbb{E}_{X_T}( G)\bigr).
\]

We seek to build a version of the conditional law of
$( X_s)_{s\leq t}$ given $X_t=y$ under $\mathbb{P}_x$, which we would call
Markovian bridge from $x$ to $y$ of length $t$. One could appeal to the
general theorem on existence of regular conditional distributions (see,
e.g., \cite{kallenberg}, Theorem 6.3, page 107), but that result
builds the whole family of conditional laws as $y$ varies and does not
give control over individual conditional laws. Since we are working on
a Polish space, we might impose further regularity conditions on
conditional laws such as their weak continuity as $y$ varies; since
there is at most one weakly continuous disintegration with respect to
the extremal values, this singles out specific conditional laws. This
is the strategy we will follow. To that end, consider a Feller family
$( \mathbb{P}_{x})_{x\in S}$ on $( S,\rho)$ and its associated semigroup
$P=( P_s)_{s\geq0}$ and suppose that $P_s$ admits a transition
density $p_s( \cdot,\cdot)$ with respect to a $\sigma$-finite
measure $\mu$ on $( S,\rho)$ in the sense that
\[
P_s f( x)=\int f( y)p_s( x,y)\mu ( dy).
\]


Fix $x\in S$ and set $\mathscr{P}_{t}=\{ y\dvtx p_t( x,y)>0\}$. Under the
hypotheses
\begin{longlist}
\item[(H1)] $y\mapsto p_s( x,y)$ is continuous for all $s\in(0,t]$,
\item[(H2)] the Chapman--Kolmogorov equations
\[
p_t( x,y)=\int p_{t-s}( x,z)p_s( z,y)\mu( dz)
\]
hold for each $y\in\mathscr{P}_{t}$, and for $0<s<t$, and
\item[(H3)] $s\mapsto p_s( x,y)$ is continuous for
all $x,y\in S$,
\end{longlist}
which are more clearly explained in Section \ref{construction}, we
prove our basic existence result.

\begin{teorema}\label{bridgeByWeakCont}
For every $y\in S$ such that $p_t( x,y)>0$, the laws
\[
\mathbb{P}_x\bigl(\cdot\vert X_t\in B_{\delta}( y)\bigr)
\]
converge weakly as $\delta\to0$ to a law $\mathbb{P}_{x,y}^t$ such that:
\begin{longlist}
\item[\textup{(1)}] $y\mapsto\mathbb{P}_{x,y}^t$ is weakly continuous, and
\item[\textup{(2)}] for every $f\in b\mathscr{B}_{S}$ and $F\in b\mathscr{F}_t$,
\[
\mathbb{E}_x\bigl( F\cdot f( X_t)\bigr)=\int_{\{ y:p_t( x,y)>0\}}\mathbb{E}_{x,y}^t( F)p_t( x,y)\mu( dy).
\]
\end{longlist}
\end{teorema}

\begin{note*}
Given $x\in S$, $t>0$, $s\in(0,t)$ and $y$ such that $p_t( x,y)>0$,
the Chapman--Kolmogorov equations of (H2) hold as
consquence of the continuity assumption (H1) if additionally $p_{t-s}(
\cdot, y)$ is bounded. 
\end{note*}

Special cases of Theorem \ref{bridgeByWeakCont} are found in the
literature: Kallenberg proves the weak continuity and the approximation
for a subclass of L\'evy processes in \cite{backwardkallenberg}, the
special case of stable L\'evy processes (when the starting and ending
points are zero) is obtained by Bertoin in \cite{bertoinLevyProcesses}, VIII.3, Proposition
11, by scaling arguments and using excursion
theory by Chaumont in \cite{ChaumontThesis,ChaumontBSM}.

By the same method of proof, we can study joint weak continuity in the
starting and ending point and the length. However, since bridge laws
associated to different lengths are defined on different Skorohod
spaces, we need to specify the interpretation of weak continuity we
will use. For every $f\in D_t$, we can associate the function
$f^t\in D_\infty$ given by $f^t( s)=f( s\wedge t)$.
This measurable mapping will be denoted by $i_t$ and 
we will say that the sequence of measures $\mathbb{P}_n^{t_n}$ on $D_{t_n}$
converge weakly if $\mathbb{P}_n^{t_n}\circ i_{t_n}^{-1}$ converges
weakly in
$D_\infty$. To simplify notation, from this point on, we will think
of bridge measures as defined on $D_\infty$ by identifying
$\mathbb{P}_{x,y}^t$ with $\mathbb{P}_{x,y}^t\circ i_t^{-1}$. Kallenberg
used in \cite{backwardkallenberg} another notion of weak continuity
with respect to the temporal parameter; it differs only when one
considers lengths that go to infinity.

A technical hypothesis, related to the joint weak continuity of bridge
laws with respect to the ending point and the length, is the
following:
\begin{longlist}
\item[(H1$^{\prime}$)]
$( s,y)\mapsto p_s( x,y)$ is continuous for all $x\in S$.
\end{longlist}
Another one, related to weak continuity with respect to all variables is
\begin{longlist}
\item[(H1$^{\prime\prime}$)] $( s,y,x)\mapsto p_s( x,y)$ is
continuous.
\end{longlist}

We have the following corollary.
\begin{cor}\label{jointWeakContinuityBridgeLaws}
Under \textup{(H1$^{\prime}$)} and \textup{(H2)}: the
bridge laws $( \mathbb{P}_{x,y}^t)$ are jointly continuous in
$y$ and $t$. Under \textup{(H1$^{\prime\prime}$)} and \textup{(H2)}, the bridge laws are
weakly continuous with respect to $x,y$ and $t$.
\end{cor}

Now we will analyze a generalization of the usual strong Markov
property for Feller processes in which Markovian bridges play a
prominent role.

Let us define, for a fixed time~$t$, the $\sigma$-fields associated to
the past
before time~$t$, $\mathscr{F}_t$, and to the future after time $t$,
$\mathscr{F}^t=\sigma( X_s\dvtx s\geq t)$
and place ourselves under (H1)--(H3); thanks to the Markov
property, we obtain the following.

The conditional law of
$X^{s,t}=( X_{( r+s)\wedge t})_{r\geq0}$ given $X_s,X_t$
under $\mathbb{P}_x$ is $\mathbb{P}_{X_s,X_t}^{t-s}$.

We shall generalize the preceding conditional description to a
\textit{strong Markov property with respect to future events}.
Actually, the method of proof will be analogous to a known one for the
strong Markov property: we will discretize the problem, then we shall
use the local property of conditional expectation (to be stated
shortly), and finally, continuity considerations will be used to
transport conclusions of the discrete setup to the continuous one. The
target result needs the following.

\begin{definic*}
A \textit{backward optional time} is a random variable
$L\dvtx D_\infty\to[0,\infty]$ such that $\{ L>t\}\in\mathscr{F}^t$
for all
$t>0$.

For a backward optional time $L$, the \textit{$\sigma$-field of
events occurring after $L$}, denoted $\mathscr{F}^L$, is defined to be
$\sigma( X\circ\theta_L)$.
\end{definic*}

As a first example, let us note that if $U\subset S$ is open, then
the \textit{last visit to $U$} equal to zero if $X$ is never in $U$ and
equal to
\[
\sup\{ s\geq0\dvtx X_s\in U\}
\]
otherwise is a backward optional time. A second example would be the
last visit to an open set (just) before a fixed time $t$ given by
\[
L_U^t=
\cases{
0,&\quad if $\{ s<t\dvtx X_s\in U\}=\varnothing$,\cr
\sup\{ s<t\dvtx X_s\in U\},&\quad otherwise.}
\]
The first example belongs to the following class of random times, which
are all backward optional times.

\begin{definic*}
A \textit{cooptional time} is a random variable
$L\dvtx D_\infty\to[0,\infty]$ such that
$L\circ\theta_t=( L-t)^+$.
\end{definic*}

Cooptional times are backward optional times since, by definition they
are random variables, and then
\[
\{ L>t\}=\{ ( L-t)^+>0\}=\theta_t^{-1}( \{ L>0\})\in\mathscr{F}^t.
\]
However, the last visit to an open set before a fixed time is an
example of a backward optional time which is not cooptional.

Backward optional times are the key to opening random temporal windows
in the Markov property. However, to provide a statement closer to the
usual expression of the strong Markov property, we will use the
\textit{shift and stop operators}
$\sigma^s_t\dvtx D_\infty\to D_\infty$ given by
\[
\sigma^s_t f( r)=
\cases{
f( r+s),&\quad if $r+s<t$,\cr
f( t-),&\quad if $r+s\geq t$.}
\]
Since these operators were defined in terms of $f( t-)$ instead
of $f( t)$, they are continuous on $D_\infty$ (or on $D_{t'}$ if $t'>t$).

To make sense of the following result, let us recall that we have
identified bridge laws on $D_t$ with their image on $D_\infty$
under the embedding $i_t\dvtx( f( s))_{s\in [0,t]}\mapsto( f( s\wedge
t))_{s\geq0}$.

\begin{teorema}[(The backward strong Markov property)]\label{backwardSMP}
Under \textup{(H1$^{\prime}$)} and \textup{(H2)}, the mapping
$( t,x,y)\mapsto\mathbb{E}_{x,y}^t( F)$ is measurable
for any measurable $F\dvtx D_\infty\to\mathbb{R}$. Let $S$ and $L$ be a
stopping and a backward time respectively. Then for any initial
distribution $\nu$ on $S$ and any $F\in b\mathscr{F}$,
\[
\mathbb{E}_\nu( F\circ\sigma^S_L\vert \mathscr{F}_S,\mathscr
{F}^L,X_{L-})=\mathbb{E}_{X_S,X_{L-}}^{L-S}( F)
\]
almost surely on $\{ S<L<\infty\}$.
\end{teorema}

The last theorem simply says that the process between a stopping time
$S$ and a backward optional time $L$ is a Markov bridge of random
length $L-S$ between its starting point $X_S$ and its ending point
$X_{L-}$. It also implies that $\sigma^S_L$ and $\mathscr{F}_S\wedge
\mathscr{F}^L$
are conditionally independent given $X_S$, $X_{L-}$ and $L-S$. This
result was stated by Kallenberg for L\'evy processes in \cite{backwardkallenberg} and, in a different framework, by Fitzsimmons,
Pitman and Yor in \cite{markovbridges}. Our point of view is that, as
for the usual strong Markov property for Feller processes, it is
trivially true in discrete time and that to pass to continuous time,
continuity considerations are useful. We can find many examples of
generalizations of the strong Markov property to random times; such
generalizations consist of two parts: a
statement of conditional independence of past and future with respect
to the present at a given random time $\tau$, and a description of the
conditional law of the pre-$\tau$ and post-$\tau$ parts of the process
given some notion of the present, which can be the $\sigma$-field
generated by
$\tau$ and $X_\tau$, or only $X_\tau$, or even more exotic ones.
See, for example, \cite{jacobsenPitmanConditioning,getoorSharpeCooptional,getoorSharpeMarkovProperties} for examples of
conditional independence (and several notions of present) and
\cite{meyerSmytheWalsh,jacobsen,millarMinimum} for examples where the
post-$\tau$ process is also analyzed.

We now turn to a pathwise construction of bridges of self-similar
Feller processes. We will focus on the state space $S=[0,\infty)$ or
$\mathbb{R}$, which contains~$0$.

\begin{definic*}
The \textit{scaling operators} $S^\gamma_v\dvtx D_\infty\to D_\infty$
are defined by
\[
S^\gamma_v f( t)=v^{1/\gamma}f( t/v).
\]
A Feller family $( \mathbb{P}_x)_{x\in S}$ is said to be
\textit{self-similar of index $\gamma$} if for every $x\in S$ and every
$v>0$, the image of $\mathbb{P}_x$ under the scaling operator
$S^\gamma_v$ is
$\mathbb{P}_{v^{1/\gamma}x}$.
\end{definic*}

We now give a pathwise construction of bridge laws associated to a
self-similar Feller family $( \mathbb{P}_x)_{x\in S}$ of index $\gamma$,
the bridges going from $0$ to any element of $S$ and of any length.
Suppose that $( \mathbb{P}_x)_{x\in S}$ satisfies (H1$^{\prime}$) and (H2);
explicit examples will be given in Section \ref{pathwise}. The
hypotheses ensure the applicability of Theorems \ref{bridgeByWeakCont}
and~\ref{backwardSMP}. Also, note that the image of $\mathbb
{P}_{x,y}^t$ under
the scaling operator $S^\gamma_v$ is
$\mathbb{P}_{v^{1/\gamma}x,v^{1/\gamma}y}^{t v}$; this can be
verified by the
approximation of bridge laws (Theorem \ref{bridgeByWeakCont}) and the
self-similarity property of $( \mathbb{P}_x)_{x\in S}$ using the
continuity of the scaling operators on Skorohod space.

For $c\in S$, define the random set
\[
\mathscr{Z}_c=\{ t\leq1\dvtx X_{t-}=ct^{1/\gamma}\}
\]
as well as the random time $g_c\dvtx D_\infty\to[0,\infty)$
\[
g_c=
\cases{
0,&\quad if $\mathscr{Z}_c=\varnothing$,\cr
\sup\mathscr{Z}_c,&\quad otherwise.}
\]
%

\begin{teorema}\label{pathwiseTheorem}
If $g_c>0$ $\mathbb{P}_0$-almost surely, the
law of
$( Y_t)_{t\in[0,1]}$ given by
\[
Y_t=
\cases{\displaystyle
\frac{1}{g_c^{1/\gamma}}X_{s\cdot g_c},&\quad if $t<1$,\cr
c,&\quad if $t\geq1$,}
\]
under $\mathbb{P}_0$ is $\mathbb{P}_{0,c}^1$.
\end{teorema}

Note that by the scaling relationship of bridge laws, we get the
following corollary under the hypotheses of Theorem \ref{pathwiseTheorem}: let $t>0$ and $x\in S$ be given and define
$c=xt^{-1/\gamma}$, then
the law of $( Y^x_s)_{s\in[0,t]}$ given by
\[
Y^x_s=
\cases{\displaystyle
\frac{t^{1/\gamma}}{g_c^\gamma}X_{s\cdot( g_c)/t},&\quad if $s<t$,\cr
x,&\quad if $s\geq t$,}
\]
under $\mathbb{P}_0$ is $\mathbb{P}_{0,x}^t$. It is therefore
important to provide
examples where $g_c>0$ $\mathbb{P}_0$-almost surely; the reader should be
warned by the following one: if $( \mathbb{P}_x)_{x\in\mathbb{R}}$
is the
Feller family associated to a stable L\'evy process of index $\alpha
\in(0,1)$ which has jumps of both signs, then $g_0=0$ almost surely,
because points are polar for them (cf. \cite{bertoinLevyProcesses}, II.5), while $g_c>0$ almost surely for all
$c\neq0$, as will be proved in Section~\ref{pathwise}. For symmetric
stable L\'evy processes of index $\alpha\in(0,1)$, this had been
proved in \cite{jakubowski}, Corollary 14.

When $\mathbb{P}_x$ is the law of linear Brownian motion started at
$x$, we
have computed the moments of $g_c$ in order to compare its law with
the Beta type (recall that $g_0$ has a Beta law thanks to P. L\'evy's
first arcsine law; cf. Exercise III.3.20 of \cite{revuzYor}). For this,
we define the function
\[
H_q( x)=\int_0^\infty e^{-xz-z^2/2}z^{q-1} \,dz.
\]
This function can be expressed in terms of the Hermite functions of
negative index; see \cite{lebedev65}, Section 10.2-5, especially
formula (10.5.2).

\begin{pro}\label{HermiteProposition}
We have
\[
\mathbb{E}( g_c^q )=\frac{\Gamma( 2q)}{2qH_{2q}( c)H_{2q}( -c)}\quad
\mbox{and}\quad\mathbb{E}( g_c^q )\sim\frac{e^{-c^2/2}}{\sqrt{\pi q}}
\]
as $q\to\infty$.
\end{pro}

Note that the asymptotic behavior of the moments of $g_c$ is only
compatible with a Beta law whose second parameter is $1/2$. The
explicit computation $H_q( 0)=2^{q/2-1}\Gamma( q/2)$
reproduces Paul L\'evy's arcsine law with help of the duplication
formula for the $\Gamma$ function.

Our next application of Theorems \ref{bridgeByWeakCont} and
\ref{backwardSMP} is related to stable subordinators and is obtained by
a Doob transformation. Let $\mathbb{P}_x^\alpha$ the law of a stable
subordinator of index $\alpha\in(0,1)$ starting at $x$. As Section
\ref{levyProcExample} shows us, $( \mathbb{P}_x^\alpha)_{x\geq0}$
is a
self-similar Feller family for which hypotheses (H1$^{\prime}$) and (H3) hold
(taking $\mu$ equal to Lebesgue measure). The transition density
$p_t^\alpha$ can be expressed in terms of the density $f^\alpha_t$ of
$X_t$ under $\mathbb{P}_0^\alpha$ as follows:
\[
p_t^\alpha( x,y)=f_t^\alpha( y-x).
\]
It is possible to compute the potential density $u^\alpha$ given by
\[
u_\alpha( a)=\int_0^\infty f^\alpha_t( a)\,
dt=\frac{1}{C\Gamma( \alpha)a^{1-\alpha}}\mathbf{1}_{a>0},
\]
as shown in \cite{satopl}, Example 37.19, page 261.

For any $0<b$, we define
\[
h_\alpha\dvtx[0,\infty)\to[0,\infty)=
\cases{
u_\alpha( b-a),&\quad if $a\leq b$,\cr
0,&\quad otherwise.}
\]
With it, we will consider the Doob $h_\alpha$-transform of $\mathbb{P}
_a^\alpha$, denoted $\mathbb{P}^{h_\alpha}_a$; it is a measure on the
Skorohod space $D_\infty$ on $[0,\infty)\cup\{ \Delta\}$
($\Delta$ is an additional isolated point called the cemetery)
concentrated on trajectories with values on $[0,b)\cup\{ \Delta\}$.
It is the (only) probability measure such that for all $A\in\mathscr{F}_s$
\[
\mathbb{P}^{h_\alpha}_a( A\cap\{ s<\zeta\})=\mathbb{E}^{\alpha
}_a\biggl( \frac{h_\alpha( X_s)}{h_\alpha( x)}\mathbf{1}_{A}\biggr).
\]
The family $\mathbb{P}^{h_\alpha}_a$, $a\in[0,b)\cup\{ \Delta\}$, is
Markovian and is associated to the Markov process termed the \textit{stable subordinator conditioned to die at $b$}. The terminology is
justified since $\Delta$ is absorbing and if the death-time $\zeta$
is defined as $\inf\{ t\dvtx X_t=\Delta\}$, then $X_{\zeta-}=b$ $\mathbb{P}
_a$-almost surely for every $a<b$ (cf. \cite{chaumontpathdecomp}). Our
next result is a pathwise construction of the conditioned stable
subordinator in terms of the subordinator itself: let $L=\sup\{ t\geq
0\dvtx X_t<b\}$ (which is finite under $\mathbb{P}_0^\alpha$), $g=X_{L-}$
and define $Y=( Y_t)_{t\geq0}$ as follows:
\[
Y_t=
\cases{
\displaystyle\frac{b}{g}X_{t( g/b)^\alpha},&\quad if $\displaystyle t( g/b)^\alpha
<L$,\cr
\Delta,&\quad otherwise.}
\]

\begin{teorema}\label{conditionedSubordinatorTheorem}
The law of $Y$ under
$\mathbb{P}_0^\alpha$ is $\mathbb{P}^{h_\alpha}_0$.
\end{teorema}

The paper is organized as follows: we give examples of weakly
continuous Markov bridges in Section \ref{examples}, we then prove
Theorem \ref{bridgeByWeakCont} and Corollary
\ref{jointWeakContinuityBridgeLaws} regarding construction and weak
continuity of Markovian bridges in Section \ref{construction}, passing
to the backward strong Markov property (Theorem \ref{backwardSMP}) in
Section \ref{backward}. The pathwise constructions of Markovian bridges
for self-similar processes of Theorem \ref{pathwiseTheorem} as well as
the additional computations for the Brownian case of Proposition
\ref{HermiteProposition} are given in Section \ref{pathwise}, which
contains also the construction of the conditioned stable process of
Theorem \ref{conditionedSubordinatorTheorem}.

\section{Examples of weakly continuous Markovian bridges}\label{examples}
In this section, we will give some examples of Feller
processes for which bridges can be built using Theorem
\ref{bridgeByWeakCont}; the emphasis is on checking the hypotheses
enabling us to use it.

We will start with a description of the probabilistic objects to
consider: Brownian motion and other L\'evy processes, and Bessel
processes. With this, we will introduce the associated bridges. At some
points, we will need facts concerning L\'evy processes and Bessel
processes. Although more precise information will follow, our main
references will be \cite{bertoinLevyProcesses,satopl} and
\cite{revuzYor}. In the examples that follow, the LCCB space $( S,\rho)$
will be either $\mathbb{R}$, $\mathbb{R}^n$ or $\mathbb
{R}_+=[0,\infty)$, endowed with
the usual metrics.

\subsection{Bridges of L\'evy processes}\label{levyProcExample}
We will now construct bridges of L\'evy process
and reproduce, from the point of view of our theory, the weakly
continuous construction of L\'evy bridges of \cite{backwardkallenberg};
the unproved facts can be consulted in \cite{bertoinLevyProcesses} or
\cite{satopl}.

Consider a L\'evy process $\xi$ (that is a c\`adl\`ag process starting at
zero with stationary and independent increments) and denote its law by
$\mathbb{P}$. $\xi$ is characterized by its \textit{characteristic exponent}
$\Psi$ which satisfies
\[ 
\mathbb{E}( e^{i u\xi_t})=e^{-t\Psi( u)}.
\]
If the trajectories of $\xi$ are increasing, so that it is a
subordinator, one can instead use its \textit{Laplace exponent} $\Phi$
given by
\[
\mathbb{E}( e^{-q\xi_t} )=e^{-t\Phi( q)}.
\]

If $\mathbb{P}_x^\Psi$ denotes the law of $\xi
+x$, then
$( \mathbb{P}_x^\Psi)_{x\in\mathbb{R}}$ is a Feller Markov family;
in the
case of
subordinators, $( \mathbb{P}^\Psi_x)_{x\geq0}$ is also Feller. Suppose
now that $\Psi$ is such that $\exp( -t\Psi)$ is integrable for any
$t>0$ (this corresponds to hypothesis (C) in
\cite{backwardkallenberg}); by Fourier inversion, one can prove that
the law of $\xi_t$ is absolutely continuous and admits a jointly
continuous density $f^\Psi_t$ bounded on
$[t,\infty)\times\mathbb{R}$ (the second factor is $\mathbb{R}_+$
in the subordinator
case) for each $t>0$. By independence and homogeneity of the
increments, the transition density $p_t^\Psi$ for $X_t$ under $\mathbb{P}_x$
can be taken equal to $f^\Psi_t( \cdot-x)$, which implies the
validity of hypotheses (H1$'$) and (H2), where the latter holds by
the bounded character of the density.

In \cite{sharpePositivityOfDensity}, it is proven that $f^\Psi_t$ is
positive on the interior of the support of the law of $\xi_t$, which is
of the form $(dt,\infty)$ for all $t>0$ or $(-\infty,dt)$ for all
$t>0$, where $d\in[-\infty,\infty]$; $|d|=\infty$ if the absolute
value of the L\'evy process is not a subordinator and it is finite
otherwise.

In particular, we can apply the preceding reasoning to \textit{stable
L\'evy processes of index $\alpha\in(0,2]$} since the characteristic
exponent satisfies
\[
\bigl|e^{-t\Psi( u)}\bigr|=e^{-tC|u|^{\alpha}}
\]
for some $C>0$. Stable L\'evy processes are the only L\'evy processes
whose Markovian family is self-similar. This includes Brownian motion,
whose characteristic exponent is $\Psi( u)=u^2/2$ and the
corresponding transition density is given explicitly as
\[
p_t( x,y)=\frac{1}{\sqrt{2\pi t}}e^{-( y-x)^2/2t}.
\]
We remark that L\'evy's representation gives a simpler way of deducing
the existence and weak continuity of Brownian bridges both in the
one-dimensional and multi-dimensional cases.

\subsection{Bridges of Bessel processes}\label{besselProcessExample}
The next family of processes we shall
consider is that of Bessel processes of dimension $\delta\in
[0,\infty)$. When $\delta\in\mathbb{Z}_+$, the law of the \textit{Bessel process} of dimension $\delta\in\mathbb{Z}_+$ starting at
$x$, denoted
$\mathbb{P}^\delta_x$, is the law of $\|\vec{x}+\vec{B}\|$ where
$B$ is a
$\delta$-dimensional Brownian motion for any vector $\vec{x}$ such that
$\|\vec{x}\|=x$. In \cite{revuzYor}, VI.3.1, page 251, it is argued that
$( \mathbb{P}^\delta_{x})_{x\in[0,\infty)}$ is a Markovian family; its
Feller property is immediate from that of Brownian motion. The case
$\delta\notin\mathbb{Z}_+$ is handled via stochastic differential equations
in \cite{revuzYor}, XI.1; its law will be denoted by
$\mathbb{P}_x^\delta$ and it constitutes
a Feller
family on $[0,\infty)$ whose transition density with respect to
Lebesgue measure, which is expressed in a simpler fashion in terms of
the \textit{index} $\nu=\delta/2-1$ associated to the dimension
$\delta>0$ and the \textit{modified Bessel function of the first kind},
denoted $I_\nu$, given by
%
\begin{equation} \label{inu}
I_\nu( x)=\sum_{k=0}^\infty
\biggl( \frac{x}{2}\biggr)^{\nu+2k}\frac{1}{k!\Gamma( 1+\nu+k)}.
\end{equation}
The transition density is given by
\[
p_t^\delta( x,y)=\frac{1}{t}\biggl( \frac{y}{x}\biggr)^{\nu
}ye^{-( x^2+y^2)/2t}I_\nu\biggl( \frac{xy}{t}\biggr)
\]
for $x>0$ and $t>0$. For $x=0$, we have the expression
\[
p_t^\delta( 0,y)=\frac{y^{2\nu+1}}{2^\nu
t^{\nu+1}\Gamma( \nu+1)}e^{-y^2/2t}.
\]
This transition density satisfies hypotheses (H1$'$) and (H2).
This is because we can bound the transition density using the
asymptotic equality
\[
I_\nu( x)\sim\frac{1}{\sqrt{2\pi x}}e^x
\]
valid as $x\to\infty$ (cf. \cite{lebedev65}, 5.11.10, page 123), which implies
\[
\sup_{x\in\mathbb{R}_+,y\leq M}p_t^\delta( x,y )<\infty
\]
for any $M>0$. We can therefore construct Bessel bridges from $x$ to
$y$ for any $x\geq0$ and $y>0$. It is possible to consider $y=0$ for a
bridge law if instead of using Lebesgue measure $\lambda$, we use the
$\sigma$-finite measure with density $y\mapsto y^{2\nu+1}$ with
respect to Lebesgue measure, which would imply the fact that the
transition density of $\{ \mathbb{P}_x^\delta\dvtx x\in[0,\infty)\}$ with
respect to it assigns a positive value to $0$ starting from any $x\in
[0,\infty)$, and satisfies hypotheses (H1$'$) and (H2).

\subsection{Bridges of Bessel processes with drift}\label{besselProcessWithDriftExample}
Bessel processes are particular
instances of Bessel processes in the wide sense, introduced in
\cite{watanabeWideSenseBessel}, which will provide the next example of
stochastic processes for which one can build bridges by weak
continuity. Let $\delta>0$, $c\geq0$ and consider $\nu=\delta/2-1$
and
\[
\rho_c( x)=2^{\nu}\Gamma( 1+\nu)\bigl( \sqrt{2c} x\bigr)^{-\nu}I_\nu\bigl(
\sqrt{2c} x\bigr),
\]
where $I_\nu$ is the modified Bessel function of the first kind given
in \ref{inu}. A \textit{Bessel process in the wide sense} with index
$( \delta,c)$ is a diffusion process on $[0,\infty)$ determined
by the local generator
\[
L^{\delta,c}=\frac{1}{2}\frac{\partial}{\partial
x^2}+\biggl( \frac{\delta-1}{2x}+\frac{\rho'_c( x)}{\rho_c( x)}\biggr)\frac
{\partial}{\partial
x};
\]
the point 0 is a reflecting boundary when $0<\alpha<2$ and an entrance
boundary for $\alpha\geq2$. When $c=0$, this is just an ordinary
Bessel process. Their law starting at $x$ will be denoted $\mathbb{P}
_x^{\delta,c}$. Bessel processes in the wide sense can also be
interpreted as Bessel processes with drift: for integer $\delta\geq
1$, $\mathbb{P}_0^{\delta,c}$ is the law of the modulus of $\delta
$-dimensional Brownian motion with a drift vector $\vec{c}$ of length
$c$ that starts at zero (cf.~\cite{pitmanYorBesselWithDrift}, Remark 5.4.iii, page
319). The last result is actually proved
through a third description of Bessel processes in the wide sense
contained in \cite{pitmanYorBesselWithDrift}, Sections 3 and~4: the
law $\mathbb{P}_x^{\delta,c}$ is locally absolutely continuous with respect
to $\mathbb{P}_x^\delta$. To describe this relationship, we introduce
$\alpha
=\sqrt{2c}$, the hitting-time $T_y$ of $y$ by the canonical process,
and the functions
\[
\phi_\alpha( x,y)=\mathbb{E}_x^\delta( e^{-\alpha T_y})
\quad\mbox{and}\quad\phi_{\alpha\uparrow}( y)=
\cases{
\phi_\alpha( x_0,y),&\quad$y\leq x_0$,\cr
1/\phi_\alpha( x_0,y),&\quad$y> x_0$,}
\]
where $x_0$ is any element of $(0,\infty)$; the choice of $x_0$
affects the definition of $\phi_{\alpha\uparrow}$ by a constant
factor, as can be seen from \cite{itomckean}. For any $t$, the
restriction of $\mathbb{P}_x^{\delta,c}$ to $\mathscr{F}_t$ is
absolutely continuous
with respect to the restriction of $\mathbb{P}_x^\delta$ to $\mathscr
{F}_t$ and the
Radon--Nikod\'ym derivative is given by
\[
\frac{d \mathbb{P}_x^{\delta,c}|_{\mathscr{F}_t}}{d \mathbb
{P}_x^{\delta}|_{\mathscr{F}
_t}}=e^{-\alpha
t}\frac{\phi_{\alpha\uparrow}( X_t)}{\phi_{\alpha \uparrow}( x)}.
\]
From the form of the Radon--Nikod\'ym derivative, we see that the
finite-dimen\-sional distributions of the bridges of $\mathbb
{P}_x^{\delta,c}$
do not depend on $c$, they are just Bessel bridges. Therefore, we get
not only the existence of bridge laws but also their weak continuity
with respect to the parameters involved, because this is the case for $c=0$.

To end this subsection, let us mention a pathwise construction of
Bessel bridges from the trajectories of Bessel processes, contained in
Theorem 5.8 of \cite{pitmanYorBesselWithDrift}, page 324. It states
that the law of the bridge of a Bessel process (with or without drift)
from $x$ to $y$ of length $t$ can be obtained as:
\begin{longlist}
\item the law of $( uX_{1/u-1/t})_{u\in[0,t]}$ under
$\mathbb{P}_{y/t}^{\delta,\sqrt{2x}}$ or as
\item the law of
$( ( \frac{t-u}{t})X_{tu/( t-u)})_{u\in[0,t]}$ under
$\mathbb{P}_{x}^{\delta,\sqrt{y/t}}$.
\end{longlist}


\section[Construction of bridges]{Construction and weak continuity of Markovian bridges}\label{construction}
In this section, we will prove Theorem
\ref{bridgeByWeakCont} and Corollary
\ref{jointWeakContinuityBridgeLaws}. First, we discuss the heuristic of
our proof, which borrows heavily from the construction of Markovian
bridge laws of \cite{markovbridges}. Recall that we are working with a
Feller family $( \mathbb{P}_x)_{x\in S}$ on a LCCB space which
admits a
transition density $p_s( x,y)$ with respect to a $\sigma$-finite
measure $\mu$.

\subsection{Heuristics}
Let $0<s<t$ and note that for every $F\in b\mathscr{F}_s$ and every
$f\in
b\mathscr{B}_{S}$ the Markov property and the Tonelli--Fubini theorem
imply
\begin{eqnarray*}
\mathbb{E}_x\bigl( F\cdot f( X_t)\bigr)
&=&\mathbb{E}_x\bigl( F\cdot P_{t-s}f( X_s)\bigr)\\
&=&\int f( y)\mathbb{E}_x\bigl( F\cdot p _{t-s}( X_s,y)\bigr)\mu( dy).
\end{eqnarray*}
By restricting the last integral to
\[
\mathscr{P}_{t}=\{ y\in S\dvtx p_t( x,y)>0\},
\]
we obtain our base formula
\[
\mathbb{E}_x\bigl( F\cdot f( X_t)\bigr)
=\int_{\mathscr{P}_{t}} \mathbb{E}_x\biggl( F\cdot\frac{p _{t-s}(
X_s,y)}{p_t( x,y)}\biggr)f( y)p_t( x,y)\mu( dy).
\]
To construct a version of the conditional law of $( X_s)_{s\leq
t}$ given $X_t=y$ under $\mathbb{P}_x$, one could therefore seek to
build a
law $\mathbb{P}_{x,y}^t$ on the Skorohod space of c\`adl\`ag\
trajectories of $[0,t]$ into $S$, denoted $D_t$, such that for
every $s<t$, $\mathbb{P}_{x,y}^t$ is absolutely continuous with
respect to $\mathbb{P}_x$ with Radon--Nikod\'ym density $M_{x,y
}^s$ given by
%
\begin{equation}\label{bridgeLAC}
M_{x,y}^s=\frac{d\mathbb{P}_{x,y}^t|_{\mathscr{F}_s}}{d\mathbb{P}
_x|_{\mathscr{F}_s}}=\frac{p_{t-s}( X_s,y)}{p_t( x ,y)},
\end{equation}
because for such measures the equality
%
\begin{equation}\label{bridgeDesintegration}
\mathbb{E}_x\bigl( F\cdot f( X_t)\bigr)=\int_{\mathscr{P}_{t}} \mathbb{E}_{x,y}^t( F)f( y)p_{t}( x,y)\mu( dy)
\end{equation}
would follow for $s<t$. Equation (\ref{bridgeDesintegration}) contains
a disintegration of the law of $( X_r)_{r<s}$ with respect to
$X_t$ under $\mathbb{P}_x$. The laws $\mathbb{P}_{x,y}^t$ are usually called
bridges since under clearly stated hypotheses, the starting point condition
\[
\mathbb{P}_{x,y}^t( X_0=x)=1
\]
as well as the \textit{ending point condition}
%
\begin{equation}\label{endPoint}
\mathbb{P}_{x,y}^t( X_{t-}=X_t=y)=1
\end{equation}
are satisfied. This explains why, even if we succeed at constructing
such a law $\mathbb{P}_{x,y}^t$, the local absolute continuity
relationship (\ref{bridgeLAC}) would not hold for $s=t$, unless of
course the law of $X_t$ under $\mathbb{P}_x$ charges $y$ and for the
examples we have considered this is not the case. However, if we can
build the laws $\mathbb{P}_{x,y}^t$ satisfying the local absolute
continuity relationship (\ref{bridgeLAC}) and the ending point
condition (\ref{endPoint}) we can extend~(\ref{bridgeDesintegration})
to $s=t$ by the following argument. Let $\sigma_t\dvtx\bigcup _{s>t}D_s\to
D_t$ be defined by
\[
\sigma_t f( s)=
\cases{
f( s),&\quad if $s<t$,\cr
f( t-),&\quad if $s\geq t$.
}
\]
Then the ending point condition (\ref{endPoint}) implies that for
every $F\in b\mathscr{F}_t$,
\[
\mathbb{P}_{x,y}^t( F=F\circ\sigma_t)=1
\]
and, since Feller processes do not jump at fixed times,
\[
\mathbb{P}_x( F=F\circ\sigma_t)=1.
\]
The disintegration (\ref{bridgeDesintegration}) can be extended to
$\mathscr{F}
_{t-}=\sigma( X_s\dvtx s<t)$ by a monotone class argument and if $F\in
b\mathscr{F}_t$
then $F\circ\sigma_t\in b\mathscr{F}_{t-}$ so that
\begin{eqnarray*}
\mathbb{E}_x\bigl( Ff( X_t)\bigr)
&=&\mathbb{E}_x\bigl( F\circ\sigma_tf( X_t)\bigr)\\
&=&\int_{\mathscr{P}_{t}} \mathbb{E}_{x,y}^t( F\circ\sigma_t)f(y)p_{t}( x,y)\mu( dy)\\
&=&\int_{\mathscr{P}_{t}} \mathbb{E}_{x,y}^t( F)f( y)p_{t}( x,y)\mu(dy).
\end{eqnarray*}

To continue our discussion of bridges, recall that weak continuity of
the bridge laws is implied by tightness and weak continuity of
one-dimensional distributions. Weak continuity of one-dimensional
distributions is implied by continuity in variation, which is implied
by continuity of the densities by Scheffe's lemma. Hence, hypotheses
(H1) and (H2) are not far fetched. Together, they imply the
weak-continuity of finite dimensional distributions, at least for times
$s<t$, since the first implies the almost sure convergence
\[
M_{x,z}^s\to M_{x,y}^s
\]
as $z\to y$ under $\mathbb{P}_x$, and the second one implies the
applicability of Scheffe's lemma, since it implies that the integral of
$M_{x,y}^s$ with respect to $\mathbb{P}_x$ is equal to $1$.
Hypothesis (H3) does not have a simple explanation but its use is
very transparent in the proof of Theorem \ref{bridgeByWeakCont}.

\subsection{The proof}
Under the set of hypotheses (H1)--(H3) we will prove the next
theorem, which by the preceding discussion proves Theorem
\ref{bridgeByWeakCont}.

\begin{teorema}\label{bridgeExistence}
On $D_t$, the laws
$\mathbb{P}_x(\cdot\vert X_t\in
B_{\delta}( y))$ converge
weakly as
$\delta\to0$ to a law $\mathbb{P}_{x,y}^t$ which satisfies the
following three
conditions:
\begin{longlist}
\item[\textup{(1)}]\hypertarget{thmLAC} the local absolute
continuity relationship (\ref{bridgeLAC}),
\item[\textup{(2)}] the ending point condition (\ref{endPoint}) and
\item[\textup{(3)}] $y\mapsto\mathbb{P}_{x,y}^t$ is
weakly continuous.
\end{longlist}
\end{teorema}

\begin{pf}
The weak convergence statement will be proved in the usual manner, by
establishing tightness and the convergence of the finite-dimensional
distributions, although some technical preliminaries are needed.

Let us first see that the support of $\mu$ is $S$: let $y
\in
S$ and consider $\delta>0$. Then, there exists $t>0$ such
that
\[
\mathbb{P}_{y}\bigl( X_t\in B_{\delta}( y)\bigr)>0
\]
since $X_t$ converges in probability to $y$ as $t\to0$ under $\mathbb{P}
_y$, because of the Feller property. Since
\[
\mathbb{P}_{y}\bigl( X_t\in B_{\delta}( y)\bigr)=\int_{B_{\delta }( y)} p_t(
y,z)\mu( dz),
\]
it follows that $\mu( B_{\delta}( y))>0$.

Now we will obtain the approximation
%
\begin{equation}\label{densityApproximation}
\lim_{\delta\to
0,z\to y}\frac{\mathbb{P}_x( X_s\in B_{\delta }( z))}{\mu(
B_{\delta}( z))}
=p_s( x,y)
\end{equation}
of the transition density $p_s$. Since $p_s( x,\cdot )$ is continuous
at $y$, for every $\varepsilon>0$ there exists $\delta>0$
such that $|p_s( x,y)-p_s( x ,z)|<\varepsilon$ for all $z\in B_{\delta
}( y)$.
Therefore, for all $\delta'<\delta/2$ and all $z\in B_{\delta/2}( y)$:
\[
\biggl|p_s( x,y)-\frac{1}{\mu( B_{\delta'}( z))}\int_{B_{\delta'}( z)} p
_s( x,z')\mu( dz)\biggr|<\varepsilon,
\]
so that (\ref{densityApproximation}) holds.

The next step is to note that if $y\in\mathscr{P}_{t}$ then for all
$\delta>0$,
\[
\mathbb{P}_x\bigl( X_t\in B_{\delta}( y)\bigr)>0.
\]
This is because, by hypothesis (H1), there exists $\delta_0$ such
that $p_t( x,z)>0$ for all $z\in B_{\delta_0}( y)$. Therefore, for all
$\delta\leq\delta_0$,
\[
\mathbb{P}_x\bigl( X_t\in B_{\delta}( y)\bigr)=\int_{B_{\delta}( y)} p _t(x,z)\mu( dz)>0
\]
since otherwise, $\mu( B_{\delta}( y))=0$.

We will now take care of property \hyperlink{thmLAC}{(1)}. For any $F\in
b\mathscr{F}_s$
where $s<t$, the Markov property implies the equality
\[
\mathbb{E}_x\bigl(F\vert X_t\in B_{\delta
}( y)\bigr)
=\mathbb{E}_y\biggl( F\cdot\frac{\mathbb{P}_{X_s}( X_{t-s}\in B_{\delta
}( y))}{\mathbb{P}_x( X_t\in B_{\delta}( y))}\biggr),
\]
the right-hand side of which converges to
\[
\mathbb{E}_x\biggl( F\cdot\frac{p_{t-s}( X_s,y)}{p_t( x,y)}\biggr)
\]
because of (\ref{densityApproximation}) and Scheffe's lemma. The
latter is applicable because of the Chapman--Kolmogorov equations. From
this, we conclude something quite a bit stronger than the convergence
of finite-dimensional distributions: for any $s<t$, the law of $(
X_r)_{r\leq s}$ converges in variation (hence weakly) to a law $\mathbb{P}
_{x,y}^{t,s}$ on $D_s$ such that
\[
\mathbb{P}_{x,y}^{t,s}( A)=\mathbb{E}_{x}\biggl( \mathbf{1}_{A}\cdot
\frac{p_{t-s}( X_s,y)}{p_t( x,y)}\biggr).
\]
In particular, if $\tilde\omega( f,t,h)$ denotes the so-called
modified modulus
of continuity on $D_t$ given by
\[
\tilde\omega( f,t,h)=\inf_{\{ t_i\}}\max_i\max_{s,s'\in
[t_{i-1},t_i)}\rho( f( s),f( s')),
\]
where the infimum extends over all partitions
\[
0=t_0<t_1<\cdots<t_n=t
\]
such that $t_i-t_{i-1}>h$, then the above functional weak convergence
implies the following condition: for all $\varepsilon>0$ and $s<t$,
%
\begin{equation}\label{partialTightness}
\lim_{h\to0}\limsup_{\delta\to
0}\mathbb{P}_x\bigl(\tilde\omega(
X,s,h)>\varepsilon| X_t\in B_{\delta }( y)\bigr)=0.
\end{equation}
We will use (\ref{partialTightness}) to study the tightness of our
approximations
\[
\mathbb{P}_x\bigl(\cdot\vert X_t\in
B_{\delta}( y)\bigr)
\]
as $\delta\to0$. Let $Z_{h}=\sup_{s,s'\in[0,h]}\rho (
X_s,X_{s'})$. It suffices, in view of the convergence of
finite-dimensional distributions on $[0,s)$ and the fact that the law
of $X_t$ under the approximating law converges weakly to unit mass at
$y$ so that all finite-dimensional distributions converge, to
verify the following for all $\varepsilon>0$:
\[
\lim_{h\to0}\lim_{\delta\to0}\mathbb{P}_x\bigl(Z_{h}\circ \theta_{t-h}>\varepsilon\vert X_t\in
B_{\delta}( y)\bigr)=0.
\]
Not only will this prove weak convergence on $D_t$, but since $h$ is
included in the supremum defining $Z_h$, it will also prove the ending
point condition for the limit law.

To that end, we will now prove a technical result displayed in
(\ref{technicalConditionWeakContinuity}). By the Feller property, for
any compact set $K\subset S$, the laws $( \mathbb{P}_z )_{z\in
K}$ are weakly continuous on $D_h$ with respect to $z$. Since
for each individual law
\[
\lim_{h\to 0}\mathbb{P}_z( Z_h>\varepsilon)= 0
\]
and $z\mapsto\mathbb{P}_z( Z_h>\varepsilon)$ is continuous (because Feller
processes do not jump at fixed times and $Z_h$ seen as a functional on
$D_\infty$ is continuous at $f$ if $f$ is continuous at $h$) and
increasing in $h$, then
%
\begin{equation}\label{technicalConditionWeakContinuity}
\lim_{h\to0}\sup_{z\in K}\mathbb{P}_z( Z_h>\varepsilon)=0.
\end{equation}
Otherwise, there would be two sequences, $( z_n)$ in $K$
and $( h_n)$ decreasing to zero, such that
\[
\liminf_{n\to0}\mathbb{P}_{z_n}( Z_{h_n}>\varepsilon)>0.
\]
However, since $K$ is compact, there exists a subsequence $( z_{n_k})$
converging to $z\in K$ and because Feller processes do not
admit fixed-time discontinuities and have c\`adl\`ag paths
\begin{eqnarray*}
0&<&\liminf_{k\to\infty}\mathbb{P}_{z_{n_k}}( Z_{h_{n_k}}>\varepsilon)
\leq\liminf_{m\to\infty}\lim_{k\to\infty}\mathbb{P}_{z _{n_k}}(Z_{h_m}>\varepsilon)\\
&=& \lim_{m\to\infty}\mathbb{P}_{z}( Z_{h_m}>\varepsilon)=0,
\end{eqnarray*}
which is a contradiction.

To continue our main line of argument, note that by local compactness,
there exists a $\delta>0$ such that $B_{\delta}( y)$ has compact
closure. We will write
\begin{eqnarray*}
&&\mathbb{P}_x\bigl(Z_{h}\circ \theta_{t-h}>\varepsilon\vert X_t\in B_{\delta}( y)\bigr)\\
&&\qquad=\mathbb{P}_x\bigl(Z_{h}\circ\theta_{t-h}>\varepsilon,X_{t-h}\in B_{\delta}( y)\vert X_t\in B_{\delta}( y)\bigr)\\
&&\qquad\quad{}+\mathbb{P}_x\bigl(Z_{h}\circ\theta_{t-h}>\varepsilon,X_{t-h}\notin B_{\delta}( y)\vert X_t\in B_{\delta}( y)\bigr)
\end{eqnarray*}
and bound each one of the summands of the right-hand side. For the
first one, use Bayes rule
\begin{eqnarray*}
&&\mathbb{P}_x\bigl(Z_{h}\circ \theta
_{t-h}>\varepsilon,X_{t-h}\in B_{\delta}( y)\vert X_t\in B_{\delta}( y)\bigr)\\
&&\qquad=\mathbb{P}_x\bigl(Z_{h}\circ\theta_{t-h}>\varepsilon,X_{t}\in B_{\delta}( y)\vert X_{t-h}\in B_{\delta}( y)\bigr)
\frac{\mathbb{P}_x( X_{t-h}\in B_{\delta}( y))}{\mathbb{P} _x(X_t\in B_{\delta}( y))}.
\end{eqnarray*}
However, in view of the Markov property, the technical result of the
last paragraph, hypothesis (H3) and the transition density
approximation (\ref{densityApproximation}):
\begin{eqnarray*}
&&\mathbb{P}_x\bigl(Z_{h}\circ\theta_{t-h}>\varepsilon,X_{t}\in B_{\delta}( y)\vert X_{t-h}\in B_{\delta}( y)\bigr)\\
&&\qquad\leq\frac{\mathbb{P}_x( Z_{h}\circ\theta_{t-h}>\varepsilon,X_{t-h}\in B_{\delta}( y))}{\mathbb{P}_x( X_{t-h}\in B_{\delta}( y))}\\
&&\qquad\leq\sup_{z\in B_{\delta}( y)}\mathbb{P}_z ( Z_h>\varepsilon)
\end{eqnarray*}
and
\[
\lim_{h\to0}\lim_{\delta\to0}\frac{\mathbb{P}_x( X_{t-h}\in B_{\delta}( y))}{\mathbb{P} _x(X_t\in B_{\delta}( y))}=\lim_{h\to0}\frac{p_{t-h}( x,y)}{p_t( x,y)}=1,
\]
so that
\[
\lim_{h\to0}\limsup_{\delta\to0} \mathbb{P}_x\bigl(Z_{h}\circ \theta_{t-h}>\varepsilon,X_{t-h}\in
B_{\delta}( y)\vert X_t\in B_{\delta}( y)\bigr)=0.
\]
We will now obtain a second bound by means of
\begin{eqnarray*}
&&\mathbb{P}_x\bigl(Z_{h}\circ \theta_{t-h}>\varepsilon,X_{t-h}\notin B_{\delta}( y)\vert X_t\in B_{\delta}( y)\bigr)\\
&&\qquad\leq\mathbb{P}_x\bigl(X_{t-h}\notin B_{\delta}( y )\vert X_t\in B_{\delta}( y)\bigr)\\
&&\qquad=1-\frac{\mathbb{P}_x( X_{t-h}\in B_{\delta}( y),X_t\in B_{\delta}( y))}{\mathbb{P}_x( X_{t-h}\in B_{\delta }( y))}
\frac{\mathbb{P}_x( X_{t-h}\in B_{\delta}( y))}{\mathbb{P} _x(X_t\in B_{\delta}( y))};
\end{eqnarray*}
we have already seen that if $\delta\to0$ and we then let $h\to0$,
the second factor in the right-hand side of the last equality converges
to $1$. To study the first factor, write it as
\[
1-\frac{\mathbb{P}_x( X_{t-h}\in B_{\delta}( y),X_t\notin B_{\delta}( y))}
{\mathbb{P}_x( X_{t-h}\in B_{\delta}( y))}
\]
and use the Feller property in the following manner: for $\delta$
small enough [so that $B_{\delta}( y)$ has compact closure]
and $\delta'\in(0,\delta)$, let $\phi\dvtx S\to[0,1]$ be a
continuous function which is equal to $1$ on $B_{\delta'}( y)$
and vanishes outside $B_{\delta}( y)$, since $\phi$ is continuous
and vanishes at infinity, the Feller property implies that for all
$z\in B_{\delta'}( y)$
\[
\mathbb{P}_z\bigl( X_{h}\notin B_{\delta}( y)\bigr)\leq
\mathbb{E}_z\bigl( 1-\phi( X_h)\bigr)=|\mathbb{E}_z( \phi( X_h))-\phi(
z)|\leq
\Vert P_h-\operatorname{Id}\Vert.
\]
Since the previous estimation does not depend on $\delta'<\delta$,
our conclusion is that it holds for all $z\in B_{\delta }( y)$ and so,
by the Markov property,
\[
\frac{\mathbb{P}_x( X_{t-h}\in B_{\delta}( y),X_t\notin B_{\delta
}( y))}
{\mathbb{P}_x( X_{t-h}\in B_{\delta}( y))}\leq\|P_h-\operatorname
{Id}\|.
\]
We finally obtain
\[
\lim_{h\to0}\lim_{\delta\to0}\mathbb{P}_x\bigl(Z_{h}\circ \theta_{t-h}>\varepsilon,X_{t-h}\notin
B_{\delta}( y)\vert X_t\in B_{\delta}( y)\bigr)=0,
\]
which implies the existence of a law $\mathbb{P}_{x,y}^t$ on $D_t$ to
which $\mathbb{P}_x(\cdot\vert X_t\in
B_{\delta}( y))$
converges weakly as $\delta\to0$. As we have already remarked,
$\mathbb{P}
_{x,y}^t$ satisfies the local absolute continuity relationship (\ref{bridgeLAC}). It also satisfies the ending point condition since the law
of $X_t$ under $\mathbb{P}_x$ conditionally on $\{ X_t\in B_{\delta
}( y)\}$ is concentrated on $B_{\delta}( y)$.

To conclude the proof of the theorem, we must examine the weak
continuity of $\mathbb{P}_{x,y}^t$ as $y$ varies. To do it, we will
prove that if $K\subset S$ is compact in $\mathscr{P}_{t}$ then
\[
\bigl( \mathbb{P}_x\bigl(\cdot\vert X_t\in
B_{\delta}( z )\bigr)\bigr)_{z\in K,\delta>0}
\]
is tight in $D_t$. If this is true then $( \mathbb{P}_{x,z }^t)_{z\in
K}$ will be tight and because as $z\to y\in
\mathscr{P}_{t}$, $\mathbb{P}_{x,z}^t$ converges in variation to
$\mathbb{P}_{x
,y}^t$ on $D_s$ and the ending point condition is satisfied,
then the finite-dimensional distributions of $\mathbb{P}_{x,z}^t$
converge to those of $\mathbb{P}_{x,y}^t$ and therefore, there is
also weak convergence. To analyze the tightness of $( \mathbb{P} _x(\cdot\vert X_t\in B_{\delta}(
z)))_{z\in K,\delta>0}$, we note that tightness holds on $D_s$ for each
$s<t$, so that it suffices to prove, for all $\varepsilon>0$,
\[
\lim_{h\to0}\lim_{\delta\to0,z\to
y}\mathbb{P}_x\bigl(Z_{h}\circ \theta
_{t-h}>\varepsilon\vert X_t\in B_{\delta}( z)\bigr)=0.
\]
Our previous arguments can be extended to this case, since by the
density approximation (\ref{densityApproximation}),
\[
\lim_{h\to0}\lim_{\delta\to
0,z\to y}\frac{\mathbb{P}_x( X_{t-h}\in B_{\delta }( z))}{\mathbb
{P}_x( X_{t}\in B_{\delta}( z))}=1
\]
and for sufficiently small $\delta$ [so that $B_{2\delta}( y)$ has
compact closure] and $z\in B_{\delta}( y)$, we have that
\[
\lim_{h\to
0}\sup_{z'\in B_{\delta}( z)}\mathbb{P}_{z '}\bigl( X_h\notin
B_{\delta}( z)\bigr)\leq\lim_{h\to
0}\sup_{z'\in B_{2\delta}( y)}\mathbb{P}_{z '}\bigl( X_h\notin
B_{2\delta}( y)\bigr)=0
\]
by (\ref{technicalConditionWeakContinuity}) and
\[
\mathbb{P}_x\bigl(X_{t}\notin
B_{\delta}( z )\vert X_{t-h}\in B_{\delta}( z)\bigr)\leq
\Vert P_h-\operatorname{Id}\Vert.
\]
\upqed\end{pf}

We have used the Feller property in particular to ensure that bridge
laws reach their end-point continuously. This is important mainly
because of a defect of the Skorohod topology: if the limit of a
sequence jumps at the end of the interval, the sequence itself must
feature a jump at the same time. In general, when we condition a Markov
process to be around $y$ at time $t$, it will not jump exactly at time
$t$. If the bridge were discontinuous at the endpoint, there would then
be no possibility of tightness. The following example shows the
importance of the Feller property for tightness of conditional laws.

Let $v\dvtx[0,1)\to\mathbb{R}_+$ be integrable on closed intervals of
$[0,1)$ but not integrable on $[0,1)$. Consider the following Markov
process, which lives on the state space $\{ 1\}\times
[0,1)\cup\{ 2\}\times\mathbb{R}$ on which we place the metric $\rho
$ given
by
\[
\rho( ( i,x),( j,y))=|i-j|+|x-y|.
\]
When started at $( 1,x)$, the first coordinate stays at $1$ and
the second undergoes an uniform motion to the right jumping to $( 2,0)$
at rate $v$. On $\{ 2\}\times\mathbb{R}$, the first coordinate
stays at two while the second undergoes a Brownian motion. Call its
measure when it starts at zero $\mathbb{P}$. Because of the conditions imposed
on $v$, the uniform motion is constrained on $\{ 1\}\times[0,1)$ and
starting from $( 1,0)$ the jump time has density $f$ given by
\[
f( s)=v( s)e^{-\int_0^s v( r) \,dr}.
\]
For any $y\neq0$ and any $s<1$, we can build the law $\mathbb
{P}_{y,s}$ under
which the canonical process does uniform motion to the right on $\{ 1\}
\times[0,1)$ until time $s$ and then jumps to $( 2,0)$ where
the second coordinate does a Brownian bridge from $0$ to $y$ of length
$1-s$. Then the measures
\[
\mathbb{P}_{( 0,1),( 2,y)}^1=\int_0^1 \mathbb{P}_{y,s}f( s) \,ds
\]
disintegrate $\mathbb{P}|_{\mathscr{F}_1}$ with respect to $X_1$.
[Note that the law
of $X_1$ under $\mathbb{P}$ assigns zero mass to $( 2,0)$, so that to
disintegrate $\mathbb{P}$, we do not need to define $\mathbb
{P}_{0}$.] Under $\mathbb{P}
_{( 0,1),( 2,y)}^1$, the density of the jump time to
$( 2,0)$ is proportional to
\[
f( s)p_{1-s}( 0,y)=f( s)\frac{1}{\sqrt{2\pi
( 1-s)}}e^{-y^2/2(1-s)}.
\]
If we now chose
\[
v( x)=\frac{1}{2\sqrt{1-s}( 1-\sqrt{1-s})},
\]
then $f( s)=1/2\sqrt{1-s}$, and we see that the law of the
unique jump time under $\mathbb{P}_{( 0,1),( 2,y)}^1$ converges to
$1$ as $y\to0$ since $1/(1-s)$ is not integrable on $[0,1)$. We
conclude that if $y_n\to0$, $( \mathbb{P}_{( 0,1),( 2,y_n)}^1)_n$ is
not tight. Indeed,
\begin{eqnarray*}
&&\lim_{h\to0}\limsup_{n\to\infty}\mathbb{P}_{( 0,1),( 2,y_n)}^1\Bigl( \sup_{u,v\in [t-h,t)}\rho( X_u,X_v)>\varepsilon\Bigr)
\\
&&\qquad\geq\lim_{h\to0}\limsup_{n\to\infty}\mathbb{P}_{y_n}\bigl( X\mbox{ jumps in }[t-h,t)\bigr)\\
&&\qquad =1
\end{eqnarray*}
for any $\varepsilon<1$. By the same reason, the conditional law of
$X$ on
$[0,1]$ given $X_1\in B_{\delta}( ( 2,0))$ is not tight as
$\delta\to0$.

We now turn to weak continuity of bridge laws with respect to the
length and the starting point.

\begin{pf*}{Proof of Corollary \ref{jointWeakContinuityBridgeLaws}}
Let us prove that as $t'\to t$ and $z\to y$ (in $\mathscr{P}_{t}$),
$\mathbb{P}_{x,z}^{t'}$ converges in law to $\mathbb{P}_{x,y}^t$.
As in
the proof of Theorem \ref{bridgeExistence}, under (H1$'$) we have
convergence in variation of $\mathbb{P}_{x,z}^{t'}|_{\mathscr{F}_s}$ to
$\mathbb{P}_{x,y}^{t}|_{\mathscr{F}_s}$ if $s<t$ and, because of the
ending point
condition, this implies not only the convergence of the
finite-dimensional distributions but also a tightness criterion on the
compact intervals of $[0,\infty)\setminus\{ t\}$. Hence, we must only
prove the following for all $\varepsilon>0$:
\[
\lim_{h\to0}\lim_{\delta\to0,z\to y,t'\to t}\mathbb{P}_x\bigl(Z_{h}\circ\theta_{t'-h}>\varepsilon\vert X_{t'}\in B_{\delta}( z)\bigr)=0.
\]

Again, we can use the same arguments as in the proof of Theorem
\ref{bridgeExistence} since under (H1$'$),
(\ref{densityApproximation}) can be generalized to
\[
\lim_{\delta\to0,z\to y,s'\to s}\frac{\mathbb{P}_x( X_{s'}\in B_{\delta}( z))}{\mu( B_{\delta}( z))}=p_s( x,y).
\]
The other bounds needed did not depend on the length parameter $t'$.

We can extend the preceding reasoning by imposing the joint continuity
of the density in all variables to obtain the joint weak continuity of
bridge laws $\mathbb{P}_{x,y}^t$ in all variables.
\end{pf*}


\section{The backward strong Markov property}\label{backward}
In this section, we will prove Theorem \ref{backwardSMP}. We begin
with a basic summary of the properties of conditional independence
which we will need. We use the notation $\mathscr{G}_1\perp_{\mathscr
{H}}\mathscr{G}_2$
to mean that $\mathscr{G}_1$ and $\mathscr{G}_2$ are conditionally
independent given a
$\sigma$-field $\mathscr{H}$. 

\begin{pro}
The $\sigma$-fields $\mathscr{G}_1$ and $\mathscr{G}_2$ are
conditionally independent
given $\mathscr{H}$ if
and only if for all $G\in b \mathscr{G}_1$:
\[
\mathbb{E}( G | \mathscr{G}_2,\mathscr{H} )=\mathbb{E}( G |\mathscr{H} ).
\]
Furthermore, for any $\sigma$-fields $\mathscr{H},\mathscr
{G},\mathscr{G}_1,\mathscr{G}_2,\ldots\,$,
the following
conditions are equivalent:
\begin{longlist}[\textup{(ii)}]
\item[\textup{(i)}] ${\displaystyle\mathscr{G}\perp_{\mathscr{H}}\mathscr
{G}_1,\mathscr{G}_2,\ldots\,}$.
\item[\textup{(ii)}] For
any $n\geq1$,
\[
\mathscr{G}\perp_{\mathscr{H},\mathscr{G}_1,\ldots,\mathscr
{G}_n}\mathscr{G}_{n+1}.
\]
\end{longlist}
Finally, if $\mathscr{G}_1\perp_{\mathscr{H}}\mathscr{G}_2$ and
$\mathscr{G}'_1\subset\mathscr{G}_1$
then
\[
\mathscr{G}'_1\perp_{\mathscr{H}}\mathscr{G}_2\quad\mbox{and}\quad\mathscr{G}_1\perp_{\mathscr{H},\mathscr{G}'_1}\mathscr{G}_2
\]
\end{pro}

The first property is the \textit{asymmetric expression of conditional
independence} and is the link between conditional independence and the
Markov property, as has been expanded upon. The second of the above
properties will be referred to as the \textit{chain rule for
conditional independence}. Proofs of them are found in \cite{kallenberg}. The third
property consists of the \textit{downward monotone character of
conditional independence} in the nonconditioning $\sigma$-fields and a
\textit{partial upward monotone character in the conditioning
$\sigma $-field}. It
is a trivial application of the chain rule since under the conditions
stated, $\sigma( \mathscr{G}_1,\mathscr{G}'_1)=\sigma( \mathscr
{G}_1)$. We cannot expect a general
upward monotone character to hold: for example, if $X$ and $Y$ are two
independent random variables on $\{ -1,1\}$ which take the two values
with equal probability, and $Z=XY$, then $X$ and $Y$ are independent
but they are not conditionally independent given $Z$, since the
conditional law of $Y$ given $X,Z$ is concentrated at $XZ$ and the
conditional law of $Y$ given $Z$ is the same as that of $Y$ since $Y$
and $Z$ are independent. The following formulation of the preceding
example might be more impressive. Let $\mathscr{H}_1\subset\mathscr
{H}_2\subset\mathscr{H}_3$
then
\[
\mathscr{G}_1\perp_{\mathscr{H}_1}\mathscr
{G}_2\quad\mbox{and}\quad\mathscr{G}_1\perp_{\mathscr{H}_3}\mathscr{G}_2\quad\mbox{do not imply}\quad\mathscr{G}_1\perp_{\mathscr{H}_2}\mathscr{G}_2;
\]
just take $\mathscr{H}_1=\{ \Omega,\varnothing\}$, $\mathscr
{H}_2=\sigma( Z)$, $\mathscr{H}_3=\sigma( X,Y)$, $\mathscr
{G}_1=\sigma( X)$ and $\mathscr{G}_2=\sigma( Y)$.
During the course of the proof of the backward strong Markov property,
we will use the following.

\begin{definic*} Given \textit{two $\sigma$-fields} $\mathscr{G}$ and
$\mathscr{G}'$, we say that they \textit{agree on a set $A$}, written
$\mathscr{G}=\mathscr{G}
'$ on
$A$, if $A\in\mathscr{G}\cap\mathscr{G}'$ and $A\cap\mathscr
{G}=A\cap\mathscr{G}'$.
\end{definic*}

\begin{pro}[(Local property of conditional expectation)]
On a probability space $( \Omega,\mathscr{F},\mathbb{P})$, let
$\mathscr{G}$ and $\mathscr{G}'$ be sub-$\sigma
$-fields of
$\mathscr{F}$
and consider two integrable random variables $\xi,\xi'$. Suppose that
$\mathscr{G}=\mathscr{G}'$ on $A$ and that $\xi=\xi'$ almost surely
on $A$.
Then
\[
\mathbb{E}( \xi | \mathscr{G} )=\mathbb{E}( \xi'
| \mathscr{G}' )\qquad\mbox{almost surely on }A.
\]
\end{pro}

The preceding proposition is proved in \cite{kallenberg}.

\begin{pf*}{Proof of Theorem \ref{backwardSMP}}
We begin by discussing the measurability of the mapping
\[
( t,x,y)\mapsto\mathbb{P}_{x,y}^t( F)
\]
for any measurable $F\dvtx D_\infty\to\mathbb{R}$. First, let us note that
the set
\[
\{ ( t,x,y)\dvtx p_t( x,y)>0\}
\]
is measurable because $( t,x,y)\mapsto p_t( x ,y)$ is measurable since
it is jointly continuous in $( t,y)$ for fixed $x$ and measurable in
$x$ for fixed
$( t,y)$. The latter is true since for all $\delta>0$,
\[
x\mapsto\frac{\mathbb{P}_x( X_t\in B_{\delta}( y))}{\mu ( B_{\delta}( y))}
\]
is measurable by the measurability property of Markovian families and
its limit as $\delta\to0$ is $p_t( x,y)$ by the
density approximation (\ref{densityApproximation}) implied by
hypothesis (H1$'$).

For the rest of the argument, we will work on the set
$\{ ( t,x,y)\dvtx p_t( x,y)>0\}$. Let us
note that
if $F\in b\mathscr{F}_s$ and $s<t$, then the local absolute continuity
relationship (\ref{bridgeLAC}) implies that $x\mapsto
\mathbb{P}_{x,y}^t( F)$ is measurable and by the monotone class
theorem, we see that the measurability extends first to any $F\in
b\mathscr{F}_t$ and then to any measurable $F$. Since by Corollary
\ref{jointWeakContinuityBridgeLaws}, $( t,y)\mapsto
\mathbb{P}_{x,y}^t( F)$ is continuous if $F$ is, we see that
$( t,x,y)\mapsto\mathbb{P}_{x,y}^t( F)$ is measurable
whenever $F$ is continuous. By a monotone class argument, the preceding
measurability extends to measurable $F$.

We now turn to the computation of the conditional expectation of
Theorem \ref{backwardSMP}. Because of the strong Markov property, it
suffices to prove the theorem when $S=0$; we will simplify the notation
for $\sigma^0_L$ to $\sigma_L$.

Let
\[
L^n=\sum_{k=0}^{\infty}\frac{k}{2^n}\mathbf{1}_{(k/2^n,(k+1)/2^n]}( L).
\]
Then $L^n$ is a random time strictly smaller than $L$ which increases
with $n$ toward~$L$. Since $L$ is a backward optional time,
\[
\biggl\{ L^n=\frac{k}{2^n}\biggr\}=\biggl\{ \frac{k}{2^n}<L\leq \frac{k+1}{2^n}\biggr\}\in
\mathscr{F}^{k/2^n}.
\]
Furthermore, the $\sigma$-fields $\mathscr{F}^{k/2^n}$ and $\mathscr
{F}^{L^n}$ agree
on the set
$\{ L^n=k/2^n\}$ since $\theta_{L^n}$ coincides with $\theta
_{k/2^n}$ on that set. For every bounded and measurable $H\dvtx D _\infty
\to\mathbb{R}$
\[
\mathbb{E}_\nu(H\circ\sigma
_{k/2^n}\mathbf{1}_{L^n=k/2^n}\vert \mathscr{F} ^{k/2^n})=\mathbb
{P}_{X_0,X_{k/2^n}}^{k/2^n}( H)\mathbf{1}_{L^n=k/2^n},
\]
so that by the local property of conditional expectation:
%
\begin{equation}\label{approximationBSMP}
\mathbb{E}_\nu(H\circ\sigma_{L^n}\vert\mathscr{F}^{L^n})=\mathbb{P} _{X_0,X_{L^n}}^{L^n}( H)\qquad\mbox{a.s. on }\{ L^n>0\}.
\end{equation}
If $H$ is actually continuous and bounded, then
\[
H\circ\sigma_{L^n}\to H\circ\sigma_L.
\]
If $A\in\mathscr{F}^L$ and $B\in\mathscr{B}_{S}$ then $A\cap\{
X_{L-}\in B\}\cap
\{ L_n>0\}\in\mathscr{F}^{L^n}$, and so (\ref{approximationBSMP}) implies
\begin{eqnarray*}
&&\mathbb{E}_\nu( H\circ\sigma^{L^n}\mathbf{1}_{A}\mathbf{1}_{X_{L-}\in B}\mathbf{1}_{L^n>0})\\
&&\qquad=\mathbb{E}_\nu( \mathbf{1}_{A}\mathbf{1}_{X_{L-}\in B}\mathbb{P} _{X_0,X_{L^n-}}^{L^n}( H)\mathbf{1}_{L^n>0}).
\end{eqnarray*}
The left-hand side of the preceding expression converges to
\[
\mathbb{E}_\nu( H\circ\sigma^{L}\mathbf{1}_{A}\mathbf
{1}_{X_{L-}\in B}\mathbf{1}_{L>0})
\]
as $n\to\infty$, while the right-hand side converges to
\[
\mathbb{E}_\nu( \mathbf{1}_{A}\mathbf{1}_{X_{L-}\in B}\mathbb
{P}_{X_0,X_{L-}}^{L}( H)\mathbf{1}_{L>0})
\]
by Corollary \ref{jointWeakContinuityBridgeLaws}, so that
\[
\mathbb{E}_\nu(H\circ\sigma_{L}\vert \mathscr{F}^{L},X_{L-})=\mathbb{P} _{X_0,X_{L-}}^{L}( H)\qquad\mbox{a.s. on }\{ L>0\}.
\]
\upqed
\end{pf*}

\section[Pathwise constructions]{Self-similarity and pathwise construction of Markovian bridge laws}\label{pathwise}
In this section, we will discuss examples for which
the pathwise construction of bridges of self-similar Feller processes
of Theorem \ref{pathwiseTheorem} works and we will verify the pathwise
construction of the stable subordinator conditioned to die at a given
level of Theorem \ref{conditionedSubordinatorTheorem}. The latter is
found in Section \ref{selfSimBridges} while the former is included
in Section \ref{conditionedSubordinator}.

\subsection{Pathwise construction of bridges of self-similar Markov processes}\label{selfSimBridges}
Note that Theorem \ref{pathwiseTheorem} is
trivial from Theorem \ref{backwardSMP}; however, the real problem lies
in identifying processes for which the hypothesis holds. In this
section, we give several (general) examples and a word of caution
against the impression that the hypothesis should hold trivially
because of self-similarity.

\begin{example}\label{spectrallyAsymmetric}
Consider first a self-similar Feller
family $( \mathbb{P}_{x})_{x\in S}$ of index $\gamma$, and suppose
that under each $\mathbb{P}_x$, the jumps of $X$ have the same sign.
As we now
see, the set
\[
\mathscr{Z}_c=\{ t\in(0,1]\dvtx X_{t-}=ct^{1/\gamma}\}
\]
is almost surely not empty under $\mathbb{P}_0$ if $\mathbb{P}_0(
X_1>c)$ and
$\mathbb{P}_0( X_1<c)$ are both positive.

To see this, consider a sequence $( t_n)$ decreasing to zero and
define the set
\[
A=\limsup_{n\to\infty}\{ X_{t_n}\mbox{ or }X_{t_n-}>ct_n^{1/\gamma}\}.
\]
By Blumenthal's 0--1 law, $A$ is $\mathbb{P}_x$ trivial for every $x$. However,
under $\mathbb{P}_0$ we can apply scaling to give
\[
\mathbb{P}_0( A)\geq\limsup_{n\to\infty}\mathbb{P}_0(X_{t_n}\mbox{ or }X_{t_n-}>ct_n^{1/\gamma})=\mathbb{P}_0( X_1\mbox{ or }X_{1-}>c).
\]
We see that $\mathbb{P}_0( A)=1$ when $\mathbb{P}_0( X_1>c)$ is positive.
By the same argument, if
\[
B=\limsup_{n\to\infty}\{ X_{t_n}\mbox{ or }X_{t_n-}<ct_n^{1/\gamma}\}
\]
then $\mathbb{P}_0( B)=1$ when $\mathbb{P}_0( X_1<c)>0$. If $\mathbb
{P} _0( X_1>c)$ and $\mathbb{P}_0( X_1<c)$ are both positive then $X$ will
cross the curve $t\mapsto ct^{1/\gamma}$ an infinite number of times
near zero, in the sense that for every $t\in(0,1)$ there will exist
$s\in(0,t)$ such that $\operatorname{sgn}( X_s-ct^{1/\gamma})\neq
\operatorname{sgn} ( X_s-c s^{1/\gamma})$. However, either the
downcrossings or the
upcrossings will touch the curve, since $X$ either decreases or
increases continuously, which implies the existence of $t\in(0,1)$
such that $X_t=ct^{1/\gamma}=X_{t-}$.

This reasoning implies that Thereom \ref{pathwiseTheorem} holds for
Brownian motion and Bessel processes. It also holds for spectrally
asymmetric stable L\'evy processes: these are stable L\'evy processes
whose jumps have almost surely the same sign.
\end{example}

\begin{example}\label{cautionPathwiseTheorem}
We continue with the special case of
stable L\'evy processes which are not spectrally asymmetric with
following result.
\end{example}

\begin{teorema}\label{cautionResult}
Let $\mathbb{P}$ be the law of a stable L\'evy
process of
index $\alpha\in(0,2]$ started at $0$; then $\mathbb{P}( g_c>0)=1$ if and
only if either $\alpha>1$ or $\alpha<1$ and $c\neq0$.
\end{teorema}

When the process is spectrally asymmetric we use Example
\ref{spectrallyAsymmetric}. When the process has jumps of both signs,
our proof of Theorem \ref{cautionResult} passes through associated
Ornstein--Uhlenbeck process, this is the process $Y$ defined by
$Y_t=e^{-t/\alpha}X_{e^t}$ for $t\in\mathbb{R}$ under $\mathbb
{P}_0$. Then $Y$ is a
stationary (time-homogeneous) Markov process whose semigroup is
described as follows (cf. \cite{breiman} or \cite{stableWindings}): let
$f_t$ be the density of $X_t$ under $\mathbb{P}_0$ with respect to Lebesgue
measure (as in Section \ref{levyProcExample}) and set
\[
p_t( x,y)=f_t( y-x).
\]
Then the semigroup of $Y$ admits transition densities $q_t,t\geq0$,
given by
%
\begin{equation}\label{transitionDensityOU}
q_t( x,y)=p_{e^t-1}( x,e^{t/\alpha}y)e^{t/\alpha}.
\end{equation}
Note the equality
\[
\{ t>0\dvtx X_{t-}=ct^{1/\alpha}\}=\exp( \{ t\in\mathbb{R}\dvtx Y_{t-}=c\}).
\]
The positivity of $g_c$ under $\mathbb{P}_0$ would follow if the set
$\{ t\in\mathbb{R}\dvtx Y_{t-}=c\}$ had no lower bound almost surely;
since $Y$ is
Feller, it is sufficient to prove this for $A=\{ t\in\mathbb{R}\dvtx
Y_{t-}\mbox{ or }Y_t=c\}$, by using their quasi-continuity as in \cite{bertoinLevyProcesses}, Corollary 8, page 22. This would in turn be obtained
if the set $A\cap(0,\infty)$ were nonempty with positive probability,
since the stationary character of $Y$ under $\mathbb{P}_0$ implies that
\begin{eqnarray*}
\mathbb{P}( A\mbox{ has no lower bound} )&=&\lim_{M\to-\infty}\mathbb{P}\bigl( A\cap(-\infty,M)\neq\varnothing
\bigr)\\
&=&\mathbb{P}\bigl( A\cap(-\infty,M)\neq\varnothing \bigr),
\end{eqnarray*}
where the last equality is a consequence of the translation invariance
of $A$ under $\mathbb{P}_0$. This same translation invariance also
shows us that
\[
\mathbb{P}\bigl( A\cap(-\infty,M)\neq\varnothing \bigr)=\lim_{M\to\infty
}\mathbb{P}\bigl( A\cap (-\infty,M)\neq\varnothing \bigr)=\mathbb{P}\bigl( A\cap
(-\infty,\infty)\neq \varnothing \bigr)
\]
and the same reasoning gives us
\[
\mathbb{P}\bigl( A\cap(-\infty,\infty)\neq\varnothing \bigr)=\mathbb{P}\bigl(
A\cap (0,\infty)\neq\varnothing \bigr).
\]
In conclusion, $\{ A\mbox{ has no lower bound}\}$ and $\{ A\cap
(0,\infty)\neq\varnothing\}$ have the same probability.
Note also that
\[
\{ A\cap(0,\infty)\neq \varnothing\}=\{ \exists t>0, Y_{t-}\mbox{ or }Y_t=c\}
\]
and that if we let $T_c=\inf\{ t\geq0\dvtx Y_{t-}\mbox{ or }Y_t=c\}$,
the question of knowing whether $g_c$ is positive or not has been
recast as a question concerning the finitude of the random time $T_c$
for the associated Ornstein--Uhlenbeck process $Y$. The latter problem
can be solved explicitly by use of polarity criteria using the
resolvent density $v_\lambda$ of $Y$ given by
\[
v_\lambda( x,y)=\int_0^\infty e^{-\lambda t}q_t( x,y) \,dt
\]
for $\lambda>0$. Note that $v_\lambda$ can take the value $\infty$;
since $q_t$ is continuous, $v_\lambda$ is lower semicontinuous and so
it is continuous at $( x,y)$ (as a function with values on
$[0,\infty]$) if $v_\lambda( x,y)=\infty$. We will see the
following proposition.

\begin{pro}\label{finitudeOfResolventDensity}
When the stable process has jumps of
both signs, $v_\lambda$ is bicontinuous and
\[
v_\lambda( x,y)<\infty\quad\Leftrightarrow\quad
\cases{
\alpha\in(1,2),\cr
\alpha=1\mbox{ and }x\neq y\mbox{ or}\cr
\alpha\in(0,1)\mbox{ and $x$ or $y$ are not zero.}
}
\]
\end{pro}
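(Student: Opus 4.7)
The first move is to replace the time variable $t$ by $s=e^{t}-1$ in the definition of $v_\lambda$; combined with the explicit form \eqref{transitionDensityOU} of $q_t$ and the self-similarity identity $f_s(z)=s^{-1/\alpha}f_1(z\,s^{-1/\alpha})$, this turns the resolvent density into
\begin{esn}
\imf{v_\lambda}{x,y}=\int_0^\infty (1+s)^{-\lambda+1/\alpha-1}\,s^{-1/\alpha}\,
\imf{f_1}{\paren{1+s}^{1/\alpha} y\,s^{-1/\alpha}-x\,s^{-1/\alpha}}\,ds.
\end{esn}
Since $f_1$ is bounded and the argument of $f_1$ converges to $y$ as $s\to\infty$, the integrand at infinity is $O(s^{-\lambda-1})$ and contributes nothing to the question of finiteness. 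The whole issue therefore concentrates at $s\to 0^+$.

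Near $s=0$, Taylor-expand $(1+s)^{1/\alpha}=1+s/\alpha+O(s^2)$ to obtain, for the argument of $f_1$,
\begin{esn}
\paren{y-x}s^{-1/\alpha}+\paren{y/\alpha}s^{1-1/\alpha}+O\paren{s^{2-1/\alpha}}.
\end{esn}
Combine this with the classical two-sided tail bound $f_1(z)\sim c_\pm|z|^{-\alpha-1}$ as $z\to\pm\infty$ (with $c_+,c_->0$, since the process has jumps of both signs) and the boundedness of $f_1$ near $0$. The case analysis that follows is routine but must be executed six times: (a) $y\neq x$ for any $\alpha$, the argument blows up like $(y-x)s^{-1/\alpha}$ and the integrand is $O(s)$; (b) $y=x\neq 0$, $\alpha\in(1,2)$, the argument goes to $0$ and the integrand is $O(s^{-1/\alpha})$, integrable; (c) $y=x=0$, any $\alpha$, the argument is identically $0$ and the integrand is a constant multiple of $s^{-1/\alpha}$; (d) $y=x\neq 0$, $\alpha=1$, the argument is the \emph{constant} $y$ and the integrand is a multiple of $s^{-1}$; (e) $y=x\neq 0$, $\alpha\in(0,1)$, the argument blows up like $(y/\alpha)s^{1-1/\alpha}$ and the integrand is $O(s^{-\alpha})$, integrable. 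Putting these together reproduces exactly the stated trichotomy.

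For the bicontinuity claim, lower semicontinuity is immediate from Fatou's lemma applied to $q_t$, which is jointly continuous in $(x,y)$ by \hh{1''} for the Lévy process (equivalently, by continuity of $f_t$ in both arguments). To upgrade lsc to continuity at any point $(x_0,y_0)$ where $v_\lambda(x_0,y_0)<\infty$, I would apply dominated convergence: combine the uniform bound $f_1(z)\le C(1+|z|)^{-\alpha-1}$ (valid on all of $\re$) with the boundedness of $f_1$ on compacts to produce, on a small neighborhood of $(x_0,y_0)$ excluded from the ``bad'' locus described by the trichotomy, a fixed integrable majorant of the integrand of the form $\min\paren{C s^{-1/\alpha},\,C's}$ times a harmless factor depending on the local data.

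\textbf{Main obstacle.} The delicate case is $\alpha=1$, because then the scale $s^{-1/\alpha}$ of the first term in the expansion and the scale $s^{1-1/\alpha}=1$ of the second term coincide, so the argument of $f_1$ \emph{stabilizes} at $y$ instead of escaping to infinity; this is exactly what produces the diagonal singularity $x=y$ characteristic of $\alpha=1$. Making the dominated-convergence argument for continuity work in a neighborhood that straddles the transition from $y\neq x$ to $y=x$ requires splitting the integral at some $s_0$ depending on $|y-x|$ and treating the two pieces by the tail bound and by the on-diagonal bound respectively; this is the one point where the estimates must be done with care rather than by inspection.
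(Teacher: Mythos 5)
Your finiteness analysis is correct and, after the substitution $s=e^t-1$, is the same computation as the paper's: both reduce the question to the small-time behaviour of the argument $(ye^{t/\alpha}-x)(e^t-1)^{-1/\alpha}$ combined with the tail asymptotics $f_1(z)\sim c_\pm\abs{z}^{-1-\alpha}$, and your five cases reproduce the paper's case analysis exactly. The lower semicontinuity observation is also the paper's starting point for continuity.

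The bicontinuity argument, however, has a genuine gap, and you have located the difficulty in the wrong place. For $\alpha=1$ the diagonal is precisely where $v_\lambda=\infty$, so lower semicontinuity already gives continuity there (as a $[0,\infty]$-valued function) and nothing delicate remains; off the diagonal the argument of $f_1$ escapes to infinity and uniform bounds are easy. The genuinely hard case is $\alpha\in(0,1)$ at a diagonal point $(y,y)$ with $y\neq 0$, where $v_\lambda(y,y)<\infty$ and actual convergence must be proved. There your proposed dominated-convergence majorant cannot exist: for $(x,z)$ near $(y,y)$ with $x>z$ (say $y>0$), the quantity $ze^{t/\alpha}-x$ vanishes at $t^*=\alpha\log(x/z)$, which tends to $0$ as $(x,z)\to(y,y)$, and at that time the integrand is of order $(t^*)^{-1/\alpha}$. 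This is not dominated by $\min(Cs^{-1/\alpha},C's)$, whose value at $s=t^*$ is the tiny quantity $C't^*$, nor by any fixed integrable function, since $s^{-1/\alpha}$ is itself non-integrable at $0$ when $\alpha<1$. What must be shown instead is that the window of times around $t^*$ on which only the on-diagonal bound $M(e^t-1)^{-1/\alpha}$ is available shrinks fast enough that its contribution vanishes, and that outside this window the off-diagonal bound $D(e^t-1)\abs{ze^{t/\alpha}-x}^{-1-\alpha}$ --- which itself blows up as $t$ approaches $t^*$ --- can be integrated uniformly in $(x,z)$. This three-region analysis (before, around, and after the crossing time), with quantitative control of the endpoints $t_1,t_2$ and a telescoping estimate near the crossing, is essentially the entire content of the paper's appendix; your remark about splitting at an $s_0$ depending on $\abs{y-x}$ points in the right direction but is attached to the wrong value of $\alpha$ and does not supply the estimates that make it work.
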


Theorem \ref{cautionResult} follows from Proposition
\ref{finitudeOfResolventDensity} by the following well-known method:
let $( \mathbb{Q}_x)_{x\in\mathbb{R}}$ be the Markovian family
associated to the
semigroup of $Y$, introduce the resolvent operator defined
by
\[
V_\lambda( x,A)=\int_0^\infty e^{-\lambda t}\mathbb{Q}_x( X_t\in A)\, dt=\int_{A}v_\lambda( x,z)\, dz,
\]
as well as the stopping time
\[
H_\varepsilon=\inf\{ t\geq0\dvtx X_t\in B_{\varepsilon}( c)\},
\]
so that by the strong Markov property,
\[
V_\lambda( x,B_{\varepsilon}( c))=\mathbb{E}_{\mathbb{Q}_x}(e^{-\lambda H_\varepsilon}V_\lambda( X_{H_\varepsilon},B_{\varepsilon}( c))).
\]
Note that as $\varepsilon\to0$, $H_\varepsilon$ converges to $T_c$.

If $x\neq c$, then in any case $v_\lambda( x,c)<\infty$
and
\[
v_{\lambda}( x,c)=\lim_{\varepsilon\to0}\frac{1}{2\varepsilon}V_\lambda( x,B_{\varepsilon}( c)).
\]
If $v_\lambda( c,c)<\infty$, then
\[
v_\lambda( c,c)=\lim_{\varepsilon\to0}\frac{1}{2\varepsilon}V_\lambda( X_{H_\varepsilon},B_{\varepsilon}( c))
\]
and the bounded convergence theorem tells us that
\[
v_\lambda( x,c)=\mathbb{E}_{\mathbb{Q}_x}( e^{-\lambda T_c})v_\lambda( c,c),
\]
so that
\[
\mathbb{E}_{\mathbb{Q}_x}( e^{-\lambda T_c})>0
\]
implying the almost sure finitude of $T_c$ under $\mathbb{Q}_x$ for
any $x\neq
c$ so that
\[
\mathbb{P}( \exists t>0\dvtx Y_{t-}\mbox{ or }Y_t=c )=\int\mathbb{Q}_x( T_c<\infty)f_1( x)\, dx=1.
\]

If, on the other hand, $v_\lambda( c,c)=\infty$, then Fatou's
lemma tells us that
\[
\mathbb{E}_{\mathbb{Q}_x}( \infty\cdot e^{-\lambda T_c} )=\mathbb{E}_{\mathbb{Q}_x}\biggl( \liminf_{\varepsilon
\to0}e^{-\lambda H_\varepsilon}\frac{1}{2\varepsilon}V_\lambda(
X_{H_\varepsilon},B_{\varepsilon }( c))\biggr) \leq
v_\lambda( x,c)<\infty,
\]
so that $\mathbb{Q}_x( T_c=\infty)=1$ for all $x\neq c$ and so
\[
\mathbb{P}( \exists t> 0\dvtx Y_{t-}\mbox{ or }Y_t=c )=\int\mathbb{Q}_x( T_c<\infty)f_1( x)\, dx=0.
\]

Even though the proof of Proposition \ref{finitudeOfResolventDensity}
requires only elementary analysis, it is long and technical; it is
therefore presented in the \hyperref[app]{Appendix}.

\begin{note*}
It is also a consequence of Proposition
\ref{finitudeOfResolventDensity} that for the Ornstein--Uhlenbeck
process driven by an $\alpha$-stable L\'evy process which has jumps of
both signs, $x$ is polar if and only if $\alpha=1$ or $\alpha\in(0,1)$
and $x=0$. Also, the resolvent density $v_\lambda$ has been explicitly
computed by Patie in \cite{PatieTwoSided} in the spectrally asymmetric
case of index $\alpha\in(1,2)$. It is expressed in terms of Novikov's
generalization of Hermite's function introduced in
\cite{novikovAlphaHermite}.
\end{note*}

Suppose now that $\mathbb{P}$ is the law of Brownian motion. We now
study the
law of the random variable $g_c$ by proving Proposition
\ref{HermiteProposition}.
\begin{pf*}{Proof of Proposition \ref{HermiteProposition}}
Again, we make use of the stationary Ornstein--Uhlenbeck process
$Y_t=e^{-t/2}X_{e^t}$. Then
\[
g_c=e^{-\inf\{ r\geq0:Y_{-r}=c\}}.
\]
By time inversion, we see that $( Y_{-t})_{t\in\mathbb{R}}$ has the
same law as $( Y_{t})_{t\in\mathbb{R}}$, so that
\[
g_c\mbox{ has the same law as } \log( \inf\{ t\geq0\dvtx Y_t=c\}).
\]

Let $\mathbb{Q}_x,x\in\mathbb{R}$, stand for the Feller family of
$Y$. The (extended)
generator $A$ of $Y$ is given by
\[
Af( x)=\frac{d}{dt}\biggr|_{t=0}\mathbb{E}_{\mathbb{Q}_x}( f( X_t))=\frac{f''( x)}{2}-\frac{1}{2}xf'( x)
\]
if $f\dvtx \mathbb{R}\to\mathbb{R}$ has two bounded continuous
derivatives. The
Laplace transform of the hitting times $T_c$ of $c$ under $\mathbb
{Q}_x$ can
be expressed in terms of monotone eigenfunctions of the preceding
generator, which are in turn, expressible in terms of parabolic
cylinder functions; integrating with respect to the law of $Y_0$ will
then give us an expression of the Mellin transform of $g_c$. We now
provide a streamlined exposition of this suited to our needs.

Consider the nonnegative function on $\mathbb{R}$ given by
%
\begin{equation}\label{HeDefinition}
H_q( x)=\int_0^\infty e^{-x z-z^2/2}z^{q-1};
\end{equation}
integrating by parts in the preceding expression leads to
\[
H_{q+2}( x)=qH_q( x)-xH_{q+1}( x)
\]
while differentiating under the integral in (\ref{HeDefinition}) gives
\[
H_q'( x)=-H_{q+1}( x),
\]
so that $H_q$ is decreasing and
\[
H_{2q}''( x)-xH_{2q}'( x)=2q H_{2q}( x).
\]
The same equation is satisfied by $x\mapsto H_{2q}( -x)$. It\^o's
formula then tells us that the processes $e^{-q t}H_{2q}( X_t)$
and $e^{-q t}H_{2q}( -X_t)$ (for $t\geq0$) are local martingales
under $\mathbb{P}_x$ for any $x$, the first one of which is bounded up
to time
$T_c$ if $x\geq c$ while the second one is bounded up to $T_c$ if
$x\leq c$. By optional stopping, we see that
\[
\mathbb{E}_{\mathbb{Q}_x}( e^{-q T_c})=\frac{H_{2q}( x\operatorname{sgn}( x-c))}{H_{2q}( c\operatorname{sgn}( x-c))}.
\]
Hence,
%
\begin{equation}\label{MellinTransformGc}
\mathbb{E}( g_c^q )=\int_{-\infty}^{\infty}\frac{e^{-x^2/2}}{\sqrt{2\pi}}\frac{H_{2q}( x\operatorname{sgn}( x-c))}{H_{2q}( c\operatorname{sgn} ( x-c))}\,dx.
\end{equation}
To evaluate the last integral, use (\ref{HeDefinition}) to obtain
\[
\int_c^{\infty}\frac{e^{-x^2/2}}{\sqrt{2\pi}}\frac{H_{2q}(x)}{H_{2q}( c)}\,dx=\frac{e^{-c^2/2}}{\sqrt{2\pi}2q}\frac{H_{2q+1}( c)}{H_{2q}( c)}
\]
so that
%
\begin{equation}\label{firstEquationForMoments}
\mathbb{E}( g_c^q )=\frac{e^{-c^2/2}}{\sqrt{2\pi}2q}\biggl( \frac{H_{2q+1}( c)}{H_{2q}( c)}+\frac{H_{2q+1}( -c)}{H_{2q}( -c)}\biggr).
\end{equation}
To obtain the required result, recall that the Wronskian $W$ of the two
solutions $c\mapsto H_q( c)$ and $c\mapsto H_q( -c)$ of the
differential equation $f''( c)-cf'( c)-qf( c)=0$ is
given by Abel's identity
\[
W( c)=W( 0)e^{-c^2/2}.
\]
However, since $H_q'( c)=-H_{q+1}( c)$, $W$ can also be
expressed as
\[
W( c)=H_q( c)H_{q+1}( -c)+H_{q}( -c)H_{q+1}( c).
\]
Substituting in (\ref{firstEquationForMoments}), we get the following
expression expression for the $q$th moment of $g_c$:
\[
\frac{1}{2q\sqrt{2\pi}}W_{2q}( 0)\frac{1}{H_{2q}( c)H_{2q}( -c)}.
\]
Since
\[
H_q( 0)=2^{q/2-1}\Gamma\biggl( \frac{q}{2}\biggr),
\]
the quantity $W( 0)$ is explicitly evaluated as follows:
\[
W( 0)=2H_{2q}( 0)H_{2q+1}( 0)=2^{2q-1/2}\Gamma \biggl( \frac{q}{2}\biggr)\Gamma\biggl(\frac{q}{2}+\frac{1}{2}\biggr)=\sqrt{2\pi}\Gamma( 2q),
\]
where we have used the duplication formula for the $\Gamma$ function
in the last equality. This gives
\[
\mathbb{E}( g_c^q )=\frac{\Gamma( 2q)}{2qH_{2q}( c)H_{2q}( -c)}.
\]
For the asymptotic behavior of the moments of $g_c$, it suffices to
apply Laplace's method (as found in \cite{deBruijn} or \cite{olver})
to obtain, as $q\to\infty$,
\[
H_q( x)\sim\sqrt{\pi} q^{q/2}e^{-x\sqrt{q}-q/2}.
\]
\upqed\end{pf*}


\subsection{Pathwise construction of the stable subordinator conditioned to die at a given level}\label{conditionedSubordinator}
The aim of this subsection is to prove
Theorem \ref{conditionedSubordinatorTheorem}.

First of all, note that the effect of the scaling operator $S^\alpha_v$
to the stable subordinator started at zero and conditioned to die at
$b>0$ gives the stable subordinator started at zero and conditioned to
die at $v^{1/\alpha}b$; this is proved by the scaling properties of
$\mathbb{P}_0^\alpha$ and its relationship to $\mathbb{P}_0^{h_\alpha}$.

We need four additional elements to verify the desired pathwise
construction:
\begin{longlist}
\item[1.] The law of $\zeta$ under
$\mathbb{P}_a^{h_\alpha}$. This is obtained from the fact that
\begin{eqnarray*}
\mathbb{P}_a^{h_\alpha}( \zeta>t)&=&\frac{1}{h_\alpha( a)}\mathbb{E}_a^{\alpha}( h_\alpha( X_t))\\
&=&\frac{1}{h_\alpha( a)}\int f^\alpha_t( x)h_\alpha( x) \,dx\\
&=&\frac{1}{h_\alpha( a)}\int_t^\infty f^\alpha _s( b-a)\, ds
\end{eqnarray*}
where the last equality follows by the definition of $h_\alpha$ and
the Chapman--Kolmogorov equations. The density of $\zeta$ under
$\mathbb{P}
_a^{h_\alpha}$ is then equal to
\[
t\mapsto\frac{f_t( b-a)}{h_\alpha( a)}.
\]
\item[2.] The computation of the law of the conditioned stable
subordinator given its death time $\zeta$ when it starts at zero.
Using the preceding expression of the density of $\zeta$ and writing
down the finite-dimensional distributions, we see that given $\zeta=t$
the stable subordinator started at $0$ and conditioned to die at $b$
has law $\mathbb{P}_{0,b}^{\alpha,t}$.
\item[3.] The computation of the law of $Y$
given its death time, equal to $L( b/g)^\alpha$. This is
accomplished by use of the backward strong Markov property: the law of
$X_{[0,L)}$ given $g=X_{L-}$ and $\mathscr{F}^{L}$ under $\mathbb
{P}_0^\alpha$ is
$\mathbb{P}^{\alpha,L}_{0,g}$. Note that $Y$ is\vspace*{1.5pt} obtained from $X$ (on
$[0,L-)$)
by applying\vspace*{-1pt} the scaling operator $S^{\alpha}_{( b/g)^\alpha}$. By
self-similarity, the law of $Y_{[0,L( b/g)^\alpha)}$ given $g$ and
$\mathscr{F}^L$ is $\mathbb{P}^{\alpha,L( b/g)^\alpha}_{0,b}$,
which only depends
on $L( b/g)^\alpha$. It follows that the law\vspace*{-1pt} of
$Y_{[0,L( g/b)^\alpha)}$ given that its death\vspace*{1pt} time is $t$ is
$\mathbb{P}_{0,b}^{0,t}$.
\item[4.] The density of $( L,g)$. This will be
performed using the Poisson process description of the stable
subordinator and will be postponed. We prove that the law of $L$ given
$g=x$ (where $x<b$) is the law of the death time of the stable
subordinator started at zero and conditioned to die at $x$ (which does
not depend on~$b$); then $L( b/g)^\alpha$ has the law of the
death time of the stable subordinator started at zero and conditioned
to die at $b$.
\end{longlist}
Summarizing, the law of the absorption time of $Y$ is equal to the law
of the absorption time of the stable subordinator conditioned to die at
$b$ started at zero, and, conditionally on the absorption times, $Y$
and the conditioned subordinator are bridges of the stable subordinator
which start at $0$, end at $b$, and whose length is the corresponding
absorption time. We conclude that $Y$ is a stable subordinator, of
index $\alpha$, conditioned to die at $b$ and started at zero.

It remains to prove that if $L=\sup\{ s\geq0\dvtx X_s<b\}$ and $g=X_{L-}$,
then under the law of stable subordinator of index $\alpha$ started at
zero, $\mathbb{P}_0^\alpha$, the conditional law of $L$ given $g$ has density
$s\mapsto f^\alpha_s( x)/u_\alpha( x)$, where $f^\alpha
_s$ is
the density of $X_s$ under $\mathbb{P}_0^\alpha$ and $u_\alpha$ is the
potential density associated to $\mathbb{P}^\alpha_x,x\geq0$. Thanks
to the
L\'evy--It\^o decomposition of L\'evy processes (see \cite{bertoinLevyProcesses}, I.1, Theorem 1), a stable subordinator increases only by
jumps, so that under $\mathbb{P}^\alpha_0$, $X_t=\sum_{s\leq
t}\Delta X_{s}$.
(In the preceding sum, there is at most a countable quantity of nonzero
terms.) Note that if $f\dvtx\mathbb{R}_+\times[0,1]\to\mathbb{R}_+$ is
measurable,
then
\[
f( L,g)=\sum_{s}f( s,X_{s-})\mathbf{1}_{X_{s-}<b<X_{s-}+\Delta X_{s}}
\]
since only one term is positive. Since under $\mathbb{P}^\alpha_0$
the jump
process of $X$, given by $( \Delta X_{t})_{t\geq0}$ is a Poisson
point process whose characteristic measure $\pi^\alpha$ is absolutely
continuous with respect to Lebesgue measure with a density given by
\[
x\mapsto\frac{\alpha C}{\Gamma( 1-\alpha)x^{1+\alpha}}\mathbf{1}_{x>0},
\]
we can use the additive formula (cf. \cite{revuzYor}, XII.1.10, page 475) to compute
\begin{eqnarray*}
\mathbb{E}^\alpha_0( f( L,g))
&=&\mathbb{E}^\alpha_0\biggl( \sum_{s}f( s,X_{s-})\mathbf{1}_{X_{s-}<b<X_{s-}+\Delta X_{s}}\biggr)\\
&=&\int_0^\infty\mathbb{E}^\alpha_0\bigl( f( s,X_{s-})\mathbf{1}_{X_{s-}<b}\pi^\alpha\bigl( [b-X_{s-},\infty)\bigr)\bigr) \,ds\\
&=&\int_0^\infty\mathbb{E}^\alpha_0\biggl( f( s,X_{s-})\mathbf{1}_{X_{s-}<b}\frac{C}{\Gamma( 1-\alpha)( b-X_{s-})^\alpha }\biggr)\,ds.
\end{eqnarray*}
We can substitute $X_{s-}$ with $X_s$ in the preceding computation, since
\[
\mathbb{P}^\alpha_0( X_{s-}=X_s)=1,
\]
to obtain
\[
\mathbb{E}^\alpha_0( f( L_,g))=\int_0^\infty\int_0^1f( s,x)f^\alpha_s( x)\frac{C}{\Gamma( 1-\alpha)( 1-x)^\alpha}\,dx\,ds.
\]
We therefore see that the joint law of $( L,g)$ under $\mathbb
{P}^\alpha
_0$ is absolutely continuous with respect to Lebesgue measure, with a
version of the density given by
\[
( s,x)\mapsto f^\alpha_s( x)\frac{C}{\Gamma ( 1-\alpha)(1-x)^\alpha}\mathbf{1}_{0<x<1}.
\]
Using the explicit value of the potential density $u_\alpha$, we see
that the law of $g$ under $\mathbb{P}_0^\alpha$ has the density
\[
\frac{1}{\Gamma( 1-\alpha)\Gamma( \alpha)x^{1-\alpha}( 1-x)^{\alpha}},
\]
so that $g$ has the generalized arc-sine law with parameter $\alpha$.
We see then that the conditional density of $L$ given $g=x$ can be
taken equal to
\[
s\mapsto f^\alpha_s( x)C\Gamma( 1-\alpha)x^{1-\alpha}=\frac{f^\alpha_s( x)}{u_\alpha( x)}
\]
as announced.

\begin{appendix}\label{app}

\section*{Appendix: Finitude and bicontinuity of\break the resolvent density}
We now prove Proposition \ref{finitudeOfResolventDensity}. Recall that
$f_t$ denotes the (bounded and positive) density of the L\'evy process
at time $t$; since the process has jumps of both signs, it is known
that $f_1( x)/|x|^{1+\alpha}$ converges as $x\to\pm\infty
$ to
positive constants $c_{\pm}$. We also have the scaling
identity
\[
f_t( x)=f_1( x t^{-1/\alpha})t^{-1/\alpha},
\]
from which we can deduce the asymptotic behavior of $f_t( x)$ as
$x\to\infty$ and $t\to0$:
%
\begin{equation}\label{asymptoticsStableDensity}
\mbox{if }t^{-1/\alpha}x\to\pm\infty\qquad\mbox{then }f_t( x)\sim\frac{c_{\pm}t}{x^{1+\alpha}}.
\end{equation}

We begin by analyzing the finitude of the resolvent density
\[
v_\lambda( x,y)=\int_0^\infty q_t( x,y)e^{-\lambda t} \,dt.
\]
From equations (\ref{transitionDensityOU}) and
(\ref{asymptoticsStableDensity}),
\[
\lim_{t\to\infty}q_t( x,y)=f_1( y),
\]
as expected since $Y$ has a stationary distribution with density $f_1$,
and so
\[
\int_a^\infty q_t( x,y)e^{-\lambda t} \,dt<\infty
\]
for all $a>0$. On the other hand, if $x,y=0$ then
\[
q_t( 0,0)=p_{e^t-1}( 0,0)=f_{e^{t}-1}( 0)=f_1( 0)\frac{1}{(e^t-1)^{1/\alpha}}\sim f_1( 0)\frac{1}{t^{1/\alpha}}
\]
so that
\[
\int_0^a q_t( 0,0)e^{-\lambda t}\, dt<\infty\quad\mbox{if and only if}\quad\alpha\in(1,2).
\]
If $x\neq y$, then
\[
|( ye^{t/\alpha}-x)( e^t-1)^{-1/\alpha}|\to\infty\qquad\mbox{as }t\to0+,
\]
so that
\begin{eqnarray*}
q_t( x,y)
&=&f_1\bigl( ( ye^{t/\alpha}-x)( e^t-1)^{-1/\alpha }\bigr)e^{t/\alpha}(e^t-1)^{-1/\alpha}\\
&\sim&\frac{c_{\pm}}{|y-x|^{1+\alpha}}( e^t-1)\to 0\qquad\mbox{as $t\to0+$.}
\end{eqnarray*}
Hence,
\[
\int_0^a q_t( x,y)e^{-\lambda t}\, dt<\infty\qquad\mbox{if }x\neq y.
\]
The only remaining case is $x=y\neq0$, and we have
\[
q_t( x,x)=f_1\bigl( x( e^{t/\alpha}-1)( e^t-1)^{-1/\alpha}\bigr)e^{t/\alpha}(
e^t-1)^{-1/\alpha}.
\]
Since
\[
\lim_{t\to
0+}( e^{t/\alpha}-1)( e^t-1)^{-1/\alpha}=
\cases{
0,&\quad if $\alpha>1$,\cr
1,&\quad if $\alpha=1$,\cr
\infty,&\quad if $\alpha<1$,}
\]
then
\[
q_t( x,x)\sim
\cases{
\displaystyle f_1( 0)( e^t-1)^{-1/\alpha},&\quad if $\alpha\in(1,2)$,\cr
\displaystyle f_1( x)( e^t-1)^{-1},&\quad if $\alpha=1$,\vspace*{2pt}\cr
\displaystyle \frac{c_{\pm}}{|x|^{\alpha+1}}t^{-\alpha}\alpha^{1+\alpha},&\quad if $\alpha\in(0,1)$,}
\qquad\mbox{as $t\to0+$}.
\]
We conclude that if $x\neq0$,
\[
v_\lambda( x,x)<\infty\quad\Leftrightarrow\quad\alpha\neq1.
\]

We proceed by analyzing the continuity of $v_\lambda$. We argued, using
the lower semicontinuity of $v_\lambda$, that it is continuous when it
is infinite. It therefore remains to see if it is continuous where it
is finite. For $\alpha\in(1,2)$, we should show that $v_\lambda$ is
continuous everywhere, an assertion which is easily handled: since
$f_1$ is bounded, say by $M$, then
%
\begin{equation}\label{oUDensityBound}
q_t( x,y)\leq Me^{t/\alpha}( e^t-1)^{-1/\alpha};
\end{equation}
the right-hand side multiplied by $e^{-\lambda t}$ is integrable on
$[0,\infty)$ for every $\lambda>0$ and so, by dominated convergence,
$v_\lambda$ is continuous and bounded when $\alpha\in(1,2)$.
Actually, for any $\alpha\in(0,2)$, $\lambda>0$ and $a>0$,
\[
( x,y)\mapsto\int_a^\infty q_t( x,y) e^{-\lambda t}\, dt
\]
is continuous and bounded. This happens since $\sup_{t\geq\varepsilon
,\
x,y\in\mathbb{R}}q_t( x,y)<\infty$ by (\ref{oUDensityBound}). It is
therefore sufficient to study the behavior of
\[
( x,y)\mapsto\int_0^a q_t( x,y) e^{-\lambda t}\, dt
\]
for some (judiciously chosen) $a>0$. To do this, note that since $f_1$
is bounded and because of its asymptotic behavior recalled in (\ref{asymptoticsStableDensity}), there exist constants $D,M>0$ such that for
any $b>0$:
\[
f_1( x)\leq
\cases{
\displaystyle M,&\quad$|x|\leq b$,\vspace*{2pt}\cr
\displaystyle\frac{D}{|x|^{1+\alpha}},&\quad$|x|>b$.}
\]
By scaling, it follows that
\[
f_t( x)\leq
\cases{
\displaystyle Mt^{-1/\alpha},&\quad$\displaystyle|x|\leq bt^{1/\alpha}$,\cr
\displaystyle \frac{D t}{|x|^{1+\alpha}},&\quad$\displaystyle|x|>bt^{1/\alpha}$.}
\]
Hence,
%
\begin{equation}\label{boundOUDensity}
q_t( x,y)\leq
\cases{
\displaystyle M( e^t-1)^{-1/\alpha},&\quad$\displaystyle|ye^{t/\alpha}-x|/( e^t-1)^{1/\alpha}\leq
b$,\cr
\displaystyle D\frac{e^t-1}{|ye^{t/\alpha}-x|^{1+\alpha}},&\quad$\displaystyle |ye^{t/\alpha }-x|/(
e^t-1)^{1/\alpha}> b$.
}
\end{equation}
To study the continuity of $v_\lambda$, we will make a careful
analysis implementing (\ref{boundOUDensity}). Let us first show that
$v_\lambda$ is bicontinuous at $( x,y)$ if $x\neq y$. First,
consider $\varepsilon,a>0$ such that
%
\begin{equation}\label{boundOnInfimum}
\mathop{\inf_{|x'-x|,|y'-y|\leq
\varepsilon}}_{t\leq a}|y'e^{t/\alpha}-x'|=\rho>0,
\end{equation}
and then use the second bound of (\ref{boundOUDensity}), with $b$
small enough, together with (\ref{boundOnInfimum}) to obtain
\[
\mathop{\sup_{|x'-x|,|y'-y|\leq\varepsilon}}_{t\leq a}q_t( x',y')\leq D\frac{e^a-1}{\rho^{1+\alpha}}.
\]
From the above, we conclude the continuity of $( x',y')\mapsto
\int_0^a q_t( x',y')e^{-\lambda t}\, dt$ at $( x,y)$.

It remains to verify the continuity of $v_\lambda$ at $( y,y)$ if
$\alpha\in(0,1)$ and $y\neq0$; for concreteness we will assume that
$y>0$. We have to argue separately that
\[
\mathop{\lim_{x,z\to y}}_{x\leq
z}v_\lambda( x,z)=v_\lambda( y,y)\quad\mbox{and}\quad\mathop{\lim_{x,z\to y}}_{x>
z}v_\lambda( x,z)=v_\lambda( y,y).
\]

$x\leq z$. Choose $\varepsilon>0$ such that $y-\varepsilon>0$;
if $x\leq z$ and
$z\geq y-\varepsilon$ then
\[
|ze^{t/\alpha}-x|\geq
z( e^{t/\alpha}-1)\geq( y-\varepsilon)( e^{t/\alpha}-1).
\]
Since $\alpha\in(0,1)$, then
\[
\lim_{t\to0}\frac{e^{t/\alpha}-1}{( e^t-1)^{1/\alpha}}\to0
\]
and so there exists $a>0$ such that
\[
\mathop{\inf_{y-\varepsilon\leq z,x\leq z}}_{0\leq t\leq a}\frac{|ze^{t/\alpha}-x|}{( e^t-1)^{1/\alpha}}\geq D.
\]
We can then continue from (\ref{boundOnInfimum}).

$x>z$. Here is where we have to be most careful since
\[
\mathop{\inf_{x,z\in B_{\varepsilon}( y)}}_{t\leq a}\frac{|ze^{t/\alpha}-x|}{( e^t-1)^{1/\alpha}}=0
\]
for all $\varepsilon,a>0$ and so the bounds used previously no longer work.

For $x>z$, let us introduce the function
\[
t\mapsto\frac{ze^{t/\alpha}-x}{( e^t-1)^{1/\alpha}},
\]
which tends to $-\infty$ as $t$ decreases to zero, tends to $z$ when
$t$ goes to infinity, touches $0$ at $\alpha\log( x/z)$, and
since its derivative is given by
\[
\frac{xe^t-ze^{t/\alpha}}{\alpha( e^t-1)^{1+1/\alpha}},
\]
it is increasing on
\[
(0,\alpha\log( x/z)/( 1-\alpha)]
\]
and decreasing on
\[
[\alpha\log( x/z)/( 1-\alpha),\infty).
\]
The function
\[
\phi\dvtx t\mapsto\frac{|ze^{t/\alpha}-x|}{( e^t-1)^{1/\alpha}}
\]
will be important in what follows because it governs, by means of
(\ref{boundOUDensity}), the choice of the bound on $q_t( x,z)$.

If $b\leq z$ then $\phi$ equals $b$ at two points, say $t_1$ and $t_2$,
delimiting the three regions on which we can bound $q_t$:
%
\begin{equation} \label{boundOnIntegrand}
q_t( x,z)e^{-\lambda t}\leq
\cases{
\displaystyle\frac{D( e^t-1)}{( x-ze^{t/\alpha})^{1+\alpha}},&\quad\mbox
{if $t\leq t_1$},\cr
\displaystyle M( e^t-1)^{-1/\alpha},&\quad if $t\in[t_1,t_2]$,\cr
\displaystyle\frac{D( e^t-1)}{( ze^{t/\alpha}-x)^{1+\alpha}},&\quad if $t\geq t_2$.}
\end{equation}
There is an obvious problem with the second region since there the
upper bound is asymptotic to $Mt^{-1/\alpha}$ which is not integrable
on $(0,\varepsilon)$ for any positive $\varepsilon$ since $\alpha\in(0,1)$.

Let us start with the first region: we had assumed that $y>0$ and so
$z>0$ if it is close enough to $y$. Let $d>0$ and set
\[
r=\alpha\log\bigl( x/z\bigl( 1-d( x-z)^{1/\alpha}\bigr)\bigr).
\]
Note that
\[
\mathop{\lim_{x,z\to y}}_{ x>z}\phi( r)=\frac{d y^{1+1/\alpha}}{\alpha^{1/\alpha}}
\]
so that $t_1\leq r$ when $d$ is small enough. We would like to see that
\[
\mathop{\limsup_{x,z\to y}}_{x>z}\int_0^r\frac{e^t-1}{( x-ze^{t/\alpha})^{1+\alpha}}\,dt=0
\]
or equivalently
\begin{equation}\label{integral}
\mathop{\limsup_{x,z\to y}}_{x>z}\int_0^r\frac{t}{( x-ze^{t/\alpha})^{1+\alpha}}\,dt=0.
\end{equation}
Since
\[
\frac{d}{dt}\frac{t}{( x-ze^{t/\alpha})^{1+\alpha}}=\frac
{1}{\alpha( x-ze^{t/\alpha})^{2+\alpha}}\bigl( \alpha x-\alpha
ze^{t/\alpha}+tze^{t/\alpha}( 1+\alpha)\bigr),
\]
we see that the integrand in (\ref{integral}), denoted $\psi$, is
increasing on $[0,r]$, going from $0$ to
\[
\frac{r}{x^{1+\alpha} d^{1+\alpha}( x-z)^{1+1/\alpha}}
\leq\frac{\alpha}{z x^{1+\alpha}d( x-z)^{1/\alpha}},
\]
where the upper bound follows from $\log( 1+t)\leq t$. Note that
if $r_0=0$ and
\[
r_n=\alpha\log\bigl( x/z\bigl(1-d_n( x-z)^{1/(1+\alpha)}\bigr)\bigr),
\]
where $( d_n)$ decreases to zero then
\[
\psi( r_n)\leq\frac{\alpha}{z x^{1+\alpha}d_n^{1+\alpha
}},\qquad
\mathop{\lim_{x,z\to
y}}_{x>z}\psi( r_n)=\frac{\alpha}{y^{2+\alpha} d_n^{1+\alpha}}
\]
and
\[
r_n-r_{n-1}\leq\alpha( x-z)^{1/(1+\alpha)}( d_{n-1}-d_n)\frac{1}{( 1-d_{n-1}(x-z)^{1/(1+\alpha)})}
\]
so that
\begin{eqnarray*}
&&\int_{r_{n-1}}^{r_n}\frac{t}{( x-ze^{t/\alpha})^{1+\alpha
}} \,dt\\
&&\qquad\leq\frac{\alpha^2}{z x^{1+\alpha}}( x-z)^{1/(1+\alpha)}\frac{d_{n-1}-d_n}{d_n^{1+\alpha}}\frac{1}{( 1-d_{n-1}(x-z)^{1/(1+\alpha)})}.
\end{eqnarray*}
If $N$ is such that $r\leq r_N$, then
\[
\mathop{\limsup_{x,z\to y}}_{ x>z}\int_0^r\psi( t) \,dt\leq\frac{\alpha^2}{y^{2+\alpha}} \mathop{\limsup_{x,z\to y}}_{x>z}( x-z)^{1/(1+\alpha)}\sum_{1\leq n\leq N-1}\frac{d_{n}-d_{n+1}}{d_{n+1}^{1+\alpha}}.
\]
Let $d_n=\delta/\sqrt{n}$, which is so chosen so that
\[
\psi( r_n)\leq
\frac{\alpha}{z}\biggl( \frac{\sqrt{n}}{x\delta}\biggr)^{1+\alpha};
\]
it also implies that
\[
\frac{d_{n}-d_{n+1}}{d_{n}^{1+\alpha}}\sim\frac{
\delta^{-\alpha}}{2n^{1-\alpha/2}}\qquad\mbox{as }n\to\infty.
\]
Finally, note that if $N$ is bigger than $\delta/d( x-z)^{-1/\alpha(
1+\alpha)}$ but taken asymptotic to it as
$x,z\to y$, then $r\leq r_N$ and
\[
\mathop{\limsup_{x,z\to y}}_{x>z}\int_0^r\psi( t)\, dt\leq
\frac{\alpha^2\delta^{-\alpha}}{y^{2+\alpha}\alpha}\mathop{\limsup_{x,z\to y}}_{x>z}( x-z)^{1/(1+\alpha)}N^{\alpha/2}/2.
\]
Since $N^{\alpha}\sim( \delta/d)^{\alpha/2}( x-z)^{1/(1+\alpha)}$ then
\[
\mathop{\limsup_{x,z\to y}}_{x>z}\int_0^r\psi( t) \,dt\leq\frac{\alpha^2\delta^{-\alpha/2}}{2y^{2+\alpha}d^{\alpha/2}\alpha},
\]
which can be made as small as we want by taking $\delta$ big enough.

We now consider the second region showing that for any
$y>0$
\[
\limsup_{b\to0}\mathop{\limsup_{x,z\to y}}_{x>z}\int_{t_1}^{t_2}( e^t-1)^{-1/\alpha}\, dt=0
\]
or equivalently
\[
\mathop{\lim_{x,z\to y}}_{x>z}\int_{t_1}^{t_2}t^{-1/\alpha}\, dt\leq\frac{2\alpha b}{y}.
\]
This will accomplished by means of a lower bound on $t_1$ and an upper
bound for $t_2$, valid as $x,z\downarrow y$; the bounds on $t_1$ and
$t_2$ are obtained as in the analysis of the first region: recall that if
\[
r_{\pm}=\alpha\log\biggl( \frac{x}{z}\bigl( 1\pm d( x-z)^{1/\alpha}\bigr)\biggr)\qquad\mbox{then } \mathop{\lim_{x,z\to y}}_{x>z}\phi( r_{\pm})
=\frac{dy^{1/\alpha+1}}{\alpha^{1/\alpha}}.
\]
Hence, for arbitrary $d> b\alpha^{1/\alpha}/y^{1/\alpha+1}$
\[
r_{-}\leq t_1\leq t_2\leq r_{+}
\]
for $x,z$ close enough to $y$. We then obtain
\[
\mathop{\lim_{x,z\to y}}_{x>z}\int_{t_1}^{t_2}t^{-1/\alpha}\, dt
\leq
\mathop{\lim_{x,z\to y}}_{x>z}\frac{r_+-r_-}{r_-^{1/\alpha}}
= \frac{2dy^{1/\alpha}}{\alpha^{1/\alpha-1}},
\]
so that
\[
\limsup_{x\to y+}\int_{t_1}^{t_2}t^{-1/\alpha} \,dt\leq\frac{2\alpha b}{y}.
\]

On the third region,
\[
\mathop{\lim_{x,z\to y}}_{x>z}\int_{t_2}^{\infty}q_t( x,z)e^{-\lambda t}\,dt=v_\lambda( y,y),
\]
as we now see. This implies the bicontinuity of $v_\lambda$ at $(
y,y)$. It suffices to prove that for any $a>0$,
\[
\mathop{\lim_{x,z\to y}}_{x>z}\int_{t_2}^{a}q_t( x,z)e^{-\lambda t}\, dt=\int_{0}^{a}q_t( y,y)e^{-\lambda t}\, dt,
\]
which we achieve by arguing as for the first region. First, note that
on $[t_2,a]$, for small enough $a$, we have
\[
q_t( x,z)\leq\frac{D( e^t-1)}{( ze^{t/\alpha}-x)^{1+\alpha}}\leq\frac{2Dt}{( ze^{t/\alpha}-x)^{1+\alpha}}.
\]
The rightmost bound, denoted $\psi$, is decreasing on $[t_2,a]$ and by setting
\[
r_1=\alpha\log\bigl( x/z\bigl( 1-d_1( x-z)^{1/(1+\alpha)}\bigr)\bigr),
\]
we see that $\psi( t)$ is uniformly bounded on $[r_1,a]$ as
$x,z\to y$, so that by dominated convergence,
\[
\mathop{\lim_{x,z\to y}}_{x>z}\int_{r_1}^{a}q_t( x,z)e^{-\lambda t}\, dt=\int_{0}^{a}q_t( y,y)e^{-\lambda t}\, dt.
\]
It remains to see that
\[
\mathop{\lim_{x,z\to y}}_{x>z}\int_{t_2}^{r_1}\frac{t}{( ze^{t/\alpha}-x)^{1+\alpha}}\, dt=0;
\]
for this we adapt the analysis of the first region from equation (\ref{integral}), using
\[
r_n=\alpha\log\bigl( x/z\bigl( 1+d_n( x-z)^{1/(1+\alpha)}\bigr)\bigr),
\]
where $d_n$ decreases to zero.
\end{appendix}

\section*{Acknowledgments}
The authors would like to thank Marc Yor for his comments regarding a
preliminary version of this work and for his ``Gaussian'' proof of
Theorem \ref{pathwiseTheorem} in the Brownian case. They also thank Jim
Pitman for valuable discussions on the relevance of the Feller property
for the construction of bridges which arrive continuously at their
endpoint. GUB would like to thank his Ph.D. supervisors, J. Bertoin and
M. E. Caballero, for their help, support and encouragement during the
development of this work. The authors would also like
to thank the anonymous referee, as well as Guillaume Coqueret, for
their
helpful comments and unearthing of inaccuracies.

%

\printaddresses

\end{document}